\newtheorem{thm}{Theorem}[section]
\newtheorem{lem}[thm]{Lemma}
\newtheorem{prop}[thm]{Proposition}
\theoremstyle{definition}
\newtheorem{dfn}[thm]{Definition}
\newtheorem{ex}[thm]{Example}
\theoremstyle{remark}
\newtheorem{remark}[thm]{Remark}
\newtheorem{notation}[thm]{Notation}
\newtheorem{assumption}[thm]{Assumption}
\newcommand{\CA}{{\mathcal{A}}}
\newcommand{\bE}{{\overline{\mathcal{E}}}}
\newcommand{\CL}{{\mathcal{L}}}
\newcommand{\CB}{{\mathcal{B}}}
\newcommand{\af}{\alpha}
\newcommand{\bt}{\beta}
\newcommand{\gm}{\gamma}
\newcommand{\ld}{\lambda}
\newcommand{\Q}{{\mathbb{Q}}}
\newcommand{\Z}{{\mathbb{Z}}}
\newcommand{\C}{{\mathbb{C}}}
\newcommand{\N}{{\mathbb{N}}}
\newcommand{\T}{{\mathbb{T}}}
\begin{document}


\title[Trace on the Thue--Morse labeled graph $C^*$-algebra]
{Unique tracial state on the labeled graph $C^*$-algebra associated to The Thue--Morse sequence}

\author[S. H. Kim]{Sun Ho Kim}
\address{
BK21 Plus Mathematical Sciences Division\\
Seoul National University\\
Seoul, 151--747\\
Korea} \email{sunho.kim.math\-@\-gmail.\-com }

\thanks{Research supported by BK21 PLUS SNU Mathematical Sciences Division}

\keywords{labeled graph $C^*$-algebra, finite $C^*$-algebra, Thue--Morse sequence}

\subjclass[2010]{46L05, 46L55, 37A55}

\begin{abstract}
We give a concrete formula for the unique faithful trace on the finite simple non-AF labeled graph $C^*$-algebra $C^*(E_{\mathbb{Z}}, \mathcal{L}, \overline{\mathcal{E}}_{\mathbb{Z}})$ associated to the Thue--Morse sequence $(E_{\mathbb{Z}}, \mathcal{L})$. 
Our result provides an alternative proof of the existence of a labeled graph $C^*$-algebra that is not Morita equivalent to any graph $C^*$-algebras.
Furthermore, we compute the $K$-groups of $C^*(E_{\mathbb{Z}}, \mathcal{L}, \overline{\mathcal{E}}_{\mathbb{Z}})$ using the path structure of the Thue--Morse sequence. 
\end{abstract}

\maketitle

\setcounter{equation}{0}

\section{Introduction}

\noindent In order to study the relationship between the class of $C^*$-algebras and the class of dynamical systems, Cuntz and Krieger \cite{CK} introduced $C^*$-algebras associated to shifts of finite type.
Since then during the past thirty years there have been many generalizations of the Cuntz-Krieger algebras via a lot of different approaches.
These generalizations include, for example, graph $C^*$-algebras $C^*(E)$ of directed graphs $E$ (see ~\cite{BPRS,FLR, KPR, KPRR} among others), Exel-Laca algebras $\mathcal{O}_A$ of infinite $\{0, 1\}$-matrices $A$ ~\cite{EL}, ultragraph $C^*$-algebras $C^*(\mathcal{G})$ ~\cite{To1}, and Matsumoto algebras $\mathcal{O}_{\Lambda}$, $\mathcal{O}_{\Lambda^*}$ of one-sided shift spaces $\Lambda$ over finite alphabets ~\cite{Ma}.
To unify aforementioned algebras, Bates and Pask ~\cite{BP1} introduced a class of $C^*$-algebras $C^*(E, \CL, \bE)$ associated to labeled spaces $(E, \CL, \bE)$, which we call the labeled graph $C^*$-algebras.

It is now well known that simple graph $C^*$-algebras are classifiable by their K-theories (see ~\cite[Theorem 3.2]{RS} and ~\cite[Remark 4.3]{R}), and are either AF or purely infinite (see ~\cite[Corollary 3.10]{KPR}).
Bates and Pask showed in ~\cite{BP2} that there exists a unital simple purely infinite labeled graph $C^*$-algebra whose  $K_0$-group is not finitely generated.  
Since the $K_0$-group of a unital graph $C^*$-algebra is always finitely generated, this example shows that the class of labeled graph $C^*$-algebras is strictly larger than the class of graph $C^*$-algebras up to isomorphism.
On the other hand, the three classes of graph $C^*$-algebras, Exel-Laca algebras, and ultragraph $C^*$-algebras are known to be equal up to Morita equivalence ~\cite{KMST2}, and so the following question naturally arises: 
``Up to Morita equivalence, is the class of labeled graph $C^*$-algebras still larger than the class of graph $C^*$-algebras?"
A negative answer to the question is given very recently in ~\cite{JKKP} by showing that there exist unital simple finite labeled graph $C^*$-algebras which are neither AF nor purely infinite.
(Note here that the property of being AF or purely infinite, simple is preserved under Morita equivalence.)

To show the existence of such a finite simple unital non-AF labeled graph $C^*$-algebra, it is proven in ~\cite{JKKP} that there exists a family of non-AF simple unital labeled graph $C^*$-algebras with traces, more precisely, the algebras in the family are obtained as crossed products of Cantor minimal subshifts.
If a Cantor minimal subshift is uniquely ergodic (for example, the subshift of Thue--Morse sequence is uniquely ergodic), its corresponding labeled graph $C^*$-algebra has a unique trace. 
In this paper, we provide another proof for the existence and uniqueness of a trace on the labeled graph $C^*$-algebra of the Thue--Morse sequence, and we give a concrete formula of the trace.
We also obtain $K$-groups of this labeled graph $C^*$-algebra associated to the Thue--Morse sequence.

The paper is organized as follows. 
In Section ~\ref{Preliminaries}, we review the definitions of a labeled graph $C^*$-algebra and the Thue--Morse sequence  $\omega$.
We also describe the structure of the AF-core of the labeled graph $C^*$-algebra $C^*(E_{\Z}, \CL, \bE_{\Z})$ associated to the Thue--Morse sequence. 
Then in Section ~\ref{Labeled paths on Thue--Morse sequence}, we discuss properties of labeled paths of $(E_{\Z}, \CL)$ by using a new notation which helps us to avoid complicated computations.
In Section ~\ref{4} we prove the existence of the unique trace on $C^*(E_{\Z}, \CL, \bE_{\Z})$ and give a concrete formula (see Theorem ~\ref{main} and Proposition ~\ref{unique trace}).
Finally, in Section ~\ref{5} we give a computation of $K$-groups and in Section ~\ref{6} a representation on $\ell^2(\Z)$ of $C^*(E_{\Z}, \CL, \bE_{\Z})$.

\section{Preliminaries}\label{Preliminaries}

\subsection{\bf Labeled graph $C^*$-algebras} 
We follow the notational convention of \cite{KPR} for graphs and of \cite{BCP, BP2} for labeled graphs and their $C^*$-algebras.

A (directed) {\it graph} $E = (E^0, E^1, r, s)$ consists of a vertex set $E^0$, an edge set $E^1$, range and source maps $r, s : E^1 \to E^0$. 
A {\it path} $\lambda$ is a sequence of edges $\lambda_1 \lambda_2 \dots \lambda_n$ with $r(\lambda_i) = s(\lambda_{i+1})$ for $i = 1, \dots, n-1$.
The {\it length} of a path $\lambda = \lambda_1 \lambda_2 \dots \lambda_n$ is $|\lambda| := n$ and we denote by $E^n$ the set of all paths of length $n$.
The range and source maps can be naturally extended to $E^n$ by $r(\lambda) := r(\lambda_n)$, $s(\lambda) := s(\lambda_1)$.   
We also denote by $E^* = \cup_{n \geq 0} E^n$ the set of all vertices and all finite paths in $E$.  
A vertex is called a {\it source} if it receives no edges and a {\it sink} if it emits no edges.

\begin{assumption}
Throughout the paper, we assume that a graph has no sinks or sources.  
\end{assumption}

Let $\CA$ be an alphabet set. A {\it labeling map} $\CL$ is a map from $E^1$ onto $\CA$. 
For a path $\lambda= \lambda_1 \lambda_2 \dots \lambda_n \in E^* \setminus E^0$, we define it labeling $\CL(\lambda)$ to be $\CL(\lambda_1) \dots \CL(\lambda_n)$ and denote by $\CL^*(E)$ the set $\cup_{n \geq 1} \CL(E^n)$ of all labeled paths.
Given a graph $E$ and a labeling map $\CL$, we call $(E, \CL)$ a {\it labeled graph}.
The {\it range} of a labeled path $\alpha$ is defined by
$$
r(\alpha) := \{r(\lambda) \in E^0 : \CL(\lambda) = \alpha\},
$$ 
and the {\it relative range} of $\alpha$ with respect to $A \subset E^0$ is defined by
$$
r(A, \alpha) := \{r(\lambda) \in E^0 : s(\lambda) \in A  \mbox{ and } \CL(\lambda) = \alpha\}.
$$ 

Let $\CB$ be a subset of $2^{E^0}$. 
If $\CB$ contains all ranges of labeled paths, and is closed under finite union, finite intersection, and relative range, then we call $\CB$ an {\it accommodating set} for $(E, \CL)$ and a triple $(E, \CL, \CB)$ a {\it labeled space}.

For $A,B \subset E^0$ and $1 \leq n \leq \infty$, let
 $$ AE^n =\{\ld\in E^n\,:\, s(\ld)\in A\},\ \
  E^nB=\{\ld\in E^n\,:\, r(\ld)\in B\}.$$
A labeled space $(E,\CL,\CB)$ is said to be {\it set-finite}
 ({\it receiver set-finite}, respectively) if for every $A\in \CB$ and $1 \leq l < \infty$ 
 the set  $\CL(AE^l)$ ($\CL(E^lA)$, respectively) is finite. 

If a labeled space $(E, \CL, \CB)$ satisfies
$$
r(A, \alpha) \cap r(B, \alpha) = r(A \cap B, \alpha)
$$
for all $A, B \in \CB$, then $(E, \CL, \CB)$ is called {\it weakly left-resolving}.
We will only consider the smallest accommodating set $\bE$ which is {\it non-degenerate} (namely closed under relative complement).
We also assume that $(E, \CL, \bE)$ always is set-finite, receiver set-finite and weakly left-resolving.  

We define an equivalence relation $\sim_l$ on $E^0$; we write $v\sim_l w$ if $\CL(E^{\leq l} v)=\CL(E^{\leq l} w)$ as in \cite{BP2}.
The equivalence class $[v]_l$ of $v$ is called a {\it generalized vertex}.
If $k>l$,  $[v]_k\subset [v]_l$ is obvious and $[v]_l=\cup_{i=1}^m [v_i]_{l+1}$ for some vertices  $v_1, \dots, v_m\in [v]_l$. 
Moreover every set $A\in \bE$ is the finite union of generalized vertices, that is, 
\begin{eqnarray}\label{generalizedvertex} 
A = \cup_{i=1}^{n}  [v_i]_l 
\end{eqnarray}
for some $v_i  \in E^0$, $l\geq 1$, and $n\geq 1$ (see \cite[Remark 2.1 and Proposition 2.4.(ii)]{BP2} and \cite[Proposition 2.3]{JKK}).

\begin{dfn}{\rm (\cite[Definition 2.1]{BCP})}
Let $(E, \CL, \bE)$ be a labeled space. 
We define a {\it representation} of a labeled space $(E, \CL, \bE)$ to be a set consisting of partial isometries $\{s_a : a \in \CA\}$ and projections $\{p_A : A \in \bE \}$ such that for $a, b \in \CA$ and $A, B \in \bE$,
\begin{enumerate}
\item[(i)]  $p_{\emptyset}=0$, $p_Ap_B=p_{A\cap B}$, and
$p_{A\cup B}=p_A+p_B-p_{A\cap B}$,
\item[(ii)] $p_A s_a=s_a p_{r(A,a)}$,
\item[(iii)] $s_a^*s_a=p_{r(a)}$ and $s_a^* s_b=0$ if $a \neq b$,
\item[(iv)] for each $A \in \bE$,
$$p_A=\sum_{a \in\CL(AE^1)} s_a p_{r(A,a)}s_a^*.$$
\end{enumerate}
It was proven in ~\cite[Theorem 4.5]{BP1} that there always exists a universal representation for a labeled space.
We call a $C^*$-algebra generated by a universal representation of $(E, \CL, \bE)$ a {\it labeled graph $C^*$-algebra} $C^*(E, \CL, \bE)$. 
\end{dfn}

\begin{remark}\label{review remarks}
Let $(E,\CL,\bE)$ be a  labeled space.
\begin{enumerate}
\item[(i)] 
By \cite[Lemma 4.4]{BP1},
it follows that  
\begin{align*}
C^*(E,\CL,\bE) & =\overline{\rm span}\{s_\af p_A s_\bt^*\,:\, \af,\,\bt\in  \CL^*(E),\ A\in \bE\} \\
                         & =\overline{\rm span}\{s_\af p_{r(\mu)} s_\bt^*\,:\, \af,\,\bt,\, \mu \in  \CL^*(E)\}.
\end{align*}
Moreover, from $\displaystyle r(\mu) = \cup_{a \in \CA} \, r(a\mu)$, we may assume that $|\mu| > |\alpha|, |\beta|$.

\item[(ii)] Universal property of  $C^*(E,\CL,\bE)=C^*(s_a, p_A)$ 
defines a strongly continuous action
$\gm:\mathbb T\to {\rm Aut}(C^*(E,\CL,\bE))$,  called
the {\it gauge action}, given by
$$\gm_z(s_a)=zs_a \ \text{ and } \  \gm_z(p_A)=p_A$$ 
for $a \in \CA$ and $A \in \bE$.

\item[(iii)]  
Let $C^*(E,\CL,\bE)^\gm$ denote the fixed point algebra of the 
gauge action. It is well known that $C^*(E,\CL,\bE)^\gm$ is an AF algebra and 
$$
C^*(E,\CL,\bE)^\gm=\overline{\rm span}\{s_\af p_{r(\mu)} s_\bt^*: |\af|=|\bt|,\ \alpha, \beta, \mu \in \CL^*(E)\}.
$$
Moreover, since $\mathbb T$ is a compact group, 
there exists a faithful conditional expectation 
$$\Psi: C^*(E,\CL,\bE)\to C^*(E,\CL,\bE)^\gm.$$

\end{enumerate} 
\end{remark}

\subsection{The Thue--Morse sequence} 

We interchangeably use the terms `block' or `word' with the term `path'.
Given an infinite path $x = \dots x_{-2} x_{-1} . x_0 x_1 x_2\dots $ or a finite path $x = x_1 x_2 \dots x_{|x|}$, we write $x_{[m,n]}$ for the block $x_m x_{m+1} \dots x_n$ for $m < n$. 
Recall that the (two-sided) Thue--Morse sequence 
$$\omega=\cdots \omega_{-2} \omega_{-1} . \omega_0 \omega_1 \omega_2 \cdots $$ 
is defined by 
$\omega_0=0$, $\omega_1=\bar 0:=1$ $(\bar 1:=0)$, 
$\omega_{[0,3]}:=\omega_0 \omega_1 \bar \omega_0 \bar \omega_1 =0110$, 
$\omega_{[0,7]}:=\omega_0 \omega_1 \omega_2 \omega_3\bar \omega_0 \bar \omega_1 \bar \omega_2 \bar \omega_3 = 01101001$ 
and so on, and then $\omega_{-i} := \omega_{i-1}$ for $i\geq 1$. 
Let $(E_{\Z}, \CL)$ be the following labeled graph labeled by the Thue--Morse sequence:

\vskip 1pc
\hskip 2pc 
\xy /r0.3pc/:(-44.2,0)*+{\cdots};(44.3,0)*+{\cdots};
(-40,0)*+{\bullet}="V-4";
(-30,0)*+{\bullet}="V-3";
(-20,0)*+{\bullet}="V-2";
(-10,0)*+{\bullet}="V-1"; (0,0)*+{\bullet}="V0";
(10,0)*+{\bullet}="V1"; (20,0)*+{\bullet}="V2";
(30,0)*+{\bullet}="V3";
(40,0)*+{\bullet}="V4";
 "V-4";"V-3"**\crv{(-40,0)&(-30,0)};
 ?>*\dir{>}\POS?(.5)*+!D{};
 "V-3";"V-2"**\crv{(-30,0)&(-20,0)};
 ?>*\dir{>}\POS?(.5)*+!D{};
 "V-2";"V-1"**\crv{(-20,0)&(-10,0)};
 ?>*\dir{>}\POS?(.5)*+!D{};
 "V-1";"V0"**\crv{(-10,0)&(0,0)};
 ?>*\dir{>}\POS?(.5)*+!D{};
 "V0";"V1"**\crv{(0,0)&(10,0)};
 ?>*\dir{>}\POS?(.5)*+!D{};
 "V1";"V2"**\crv{(10,0)&(20,0)};
 ?>*\dir{>}\POS?(.5)*+!D{};
 "V2";"V3"**\crv{(20,0)&(30,0)};
 ?>*\dir{>}\POS?(.5)*+!D{};
 "V3";"V4"**\crv{(30,0)&(40,0)};
 ?>*\dir{>}\POS?(.5)*+!D{};
 (-35,1.5)*+{0};(-25,1.5)*+{1};
 (-15,1.5)*+{1};(-5,1.5)*+{0};(5,1.5)*+{\omega_0=0};
 (15,1.5)*+{\omega_1=1};(25,1.5)*+{1};(35,1.5)*+{0};
 (0.1,-2.5)*+{v_0};(10.1,-2.5)*+{v_1};
 (-9.9,-2.5)*+{v_{-1}};
 (-19.9,-2.5)*+{v_{-2}};
 (-29.9,-2.5)*+{v_{-3}};
 (-39.9,-2.5)*+{v_{-4}}; 
 (20.1,-2.5)*+{v_{2}};
 (30.1,-2.5)*+{v_{3}};
 (40.1,-2.5)*+{v_{4}}; 
\endxy 

\vskip 1pc
\noindent 
For a path $\af=\af_1\dots \af_{|\af|}\in \CL^*(E_{\Z})$, we also write
$$\af^{-1}:=  \af_{|\af|}\dots \af_1 \ \text{ and }\ 
 \overline{\af}:=\overline{\af_1}\dots \overline{\af_{|\af|}}.$$ 
Then clearly $(\af^{-1})^{-1} = \af$, $\overline{\af^{-1}} = ({\overline{\af}})^{-1}$, 
and $(\af\bt)^{-1}=\bt^{-1}\af^{-1}$ for $\af,\bt\in \CL^*(E_{\Z})$.

\subsection{\bf Labeled graph $C^*$-algebra associated to the Thue--Morse sequence} 

From now on, we mainly consider the labeled space $(E_{\Z}, \CL, \bE_{\Z})$ and the labeled graph $C^*$-algebra $C^*(E_{\Z}, \CL, \bE_{\Z})$ associated to the Thue--Morse sequence.
Clearly the $C^*$-algebra $C^*(s_a, p_A) := C^*(E,\CL,\bE)$ is unital with the unit $s_0s_0^*+s_1s_1^*$ and is known to be a simple $C^*$-algebra which is finite but not AF (see \cite[Example 4.9]{JKK} and \cite[Theorem 3.7]{JKKP}). 

It is useful to note that 
\begin{eqnarray}\label{accommodating set form} 
\bE_{\Z}=\big\{ \bigsqcup_{ \substack{1\leq i\leq K\\ |\af_i|=m}} r(\af_i): \, m\geq 1,\ K\geq 1\, \big\}.
\end{eqnarray}
Actually since the graph $E_{\Z}$ consists of a single bi-infinite path, it is easy to see that for each $v \in E^0$, $\CL(E^l v) =\{\alpha\}$ is a singleton set and so $[v]_l = r(\alpha)$.
Equivalently,
$$
r(\alpha) = [v]_{|\alpha|} \,\, \mbox{ for any } v \in r(\alpha).
$$
Thus every set $A \in \bE_{\Z}$ is a disjoint union of $r(\alpha_i)$ for $i = 1, \dots, n$ by (\ref{generalizedvertex}).
Moreover, we can assume that the lengths $|\alpha_i|$ are all equal since $r(\alpha) = r(0\alpha) \sqcup r(1\alpha)$.
Hence (\ref{accommodating set form}) follows. 

Furthermore for $\alpha, \beta \in \CL^*(E_{\Z})$ with $|\alpha|=|\beta|$, the intersection of two range sets $r(\alpha) \cap r(\beta)$ is nonempty if and only if $\alpha = \beta$.
That is, $s_{\alpha} p_{r(\mu)} s_{\beta}^* \neq 0$ for $|\alpha| = |\beta| < |\mu|$ if and only if $\alpha = \beta$ and $\emptyset \neq r(\mu)  \subset r(\alpha)$. 
Note that $r(\mu) \subset r(\alpha)$ if and only if $\mu = \beta \alpha$ for some $\beta \in \CL^*(E_{\Z})$.
From this observation, we have
\begin{align*}
C^*(E_{\Z}, \CL, \bE_{\Z})^{\gamma} & = \overline{\rm{span}} \{s_{\alpha} p_{r(\mu)} s_{\beta}^* : |\alpha| = |\beta|, \alpha, \beta, \mu \in \CL^*(E_{\Z}) \}\\
                                                & = \overline{\rm{span}} \{s_{\alpha} p_{r(\mu)} s_{\alpha}^* : \alpha, \mu \in \CL^*(E_{\Z}) \}\\
                                                & = \overline{\rm{span}} \{s_{\alpha} p_{r(\beta \alpha)} s_{\alpha}^* : \alpha, \beta \in \CL^*(E_{\Z}) \}.
\end{align*}

\begin{remark}\label{remark_AF}
We will use in the Section ~\ref{4} the following structural properties of the AF algebra $C^*(E_{\Z}, \CL, \bE_{\Z})^{\gamma}$.
\begin{enumerate}
\item[(i)] For each $k \geq 1$, let
$$
F_k = \overline{\rm{span}} \{ s_{\alpha} p_{r(\beta \alpha)} s_{\alpha} : |\alpha| = |\beta| = k \},
$$
be the finite dimensional $C^*$-subalgebra of $C^*(E_{\Z}, \CL, \bE_{\Z})^{\gamma}$ and let $\iota_k : F_k \to F_{k+1}$ be the inclusion map:
\begin{equation}\label{iota_inclusion}
\iota_k (s_{\alpha} p_{r(\beta \alpha)} s_{\alpha}) = \sum_{a \in \{0,1\}} s_{\alpha a} p_{r(\beta \alpha a)} s_{\alpha a} 
                                                                              = \sum_{a,b \in \{0, 1\}} s_{\alpha a} p_{r(b \beta \alpha a)} s_{\alpha a}.
\end{equation}
Then the AF algebra $C^*(E_{\Z}, \CL, \bE_{\Z})^{\gamma}$ is the inductive limit, $\displaystyle\lim_{\longrightarrow} (F_k, \iota_k)$.

\item[(ii)] For each $k \geq 1$, the $C^*$-subalgebra $F_k$ of $C^*(E_{\Z}, \CL, \bE_{\Z})^{\gamma}$ is commutative and isomorphic to $\mathbb{C}^{d_k}$ where $d_k = |\CL(E^{2k})|$. 
This directly follows from the fact that the product of two elements $s_{\alpha} p_{r(\beta \alpha)} s_{\alpha}^*$, $s_{\alpha'} p_{r(\beta' \alpha')} s_{\alpha'}^* \in F_k$ is equal to
\begin{align*}
(s_{\alpha} p_{r(\beta \alpha)} s_{\alpha}^*) (s_{\alpha'} p_{r(\beta' \alpha')} s_{\alpha'}^*) & = \delta_{\alpha, \alpha'} s_{\alpha} p_{r(\beta \alpha) \cap r(\beta' \alpha')} s_{\alpha'}^*\\
& =\delta_{\beta \alpha, \beta' \alpha'} s_{\alpha} p_{r(\beta \alpha)} s_{\alpha}^*.
\end{align*}

\end{enumerate}
\end{remark}

\section{Labeled paths on the Thue--Morse sequence}\label{Labeled paths on Thue--Morse sequence}

\noindent For $b, c \in \CL^*(E_{\Z})$ with $|c| = n$, the {\it product} $b \times c$ denotes the block (of length $|b| \times |c|$) obtained by
$n$ copies of $b$ or $\overline{b}$ according to the rule: choosing the $i$th copy as $b$ if $c_i = 0$ and $\overline{b}$ if $c_i = 1$ (see \cite{Ke}). 
For example, if $b = 01$ and $c = 011$, then $b \times c$ is equal to $b \overline{b} \overline{b} = 011010$. 
Then the recurrent sequence
$$
01 \times 01 \times \cdots
$$
is a one-sided Morse sequence.

To compute the trace values on $C^*(E_{\Z}, \CL, \bE_{\Z})$, we will use the following notation for convenience. 
Let 
$0^{(0)} := 0$, $1^{(0)} := 1$
and
$$0^{(n)} := 0^{(n-1)} 1^{(n-1)}, \quad 1^{(n)} := 1^{(n-1)} 0^{(n-1)}$$
for all $n$, inductively. 
Then the length of $i^{(n)}$ ($i =0$ or $1$) is $2^n$, and
$$
0^{(n)} = \underbrace{01 \times \cdots \times 01}_{(n-1)\mbox{-times}} \times 01 \mbox{ and } 1^{(n)} = \underbrace{01 \times \cdots \times 01}_{(n-1)\mbox{-times}} \times 10.
$$
Note that
$\overline{i^{(n)}} = \overline{i}^{(n)}= (1-i)^{(n)}$
and
$$(i^{(n)})^{-1} = \left\{ \begin{array}{ll} \overline{i}^{(n)}, & \hbox{if n is odd,} \\
                                                                   i^{(n)}, & \hbox{if n is even}
                                         \end{array}
                                \right.
$$
for $i = 0, 1$.
Using the notation, we see that this Morse sequence has a fractal aspect:
\begin{align} \label{defofmorse}
\omega & \ =  \cdots \omega_{-4} \omega_{-3} \omega_{-2} \omega_{-1} . \omega_0 \omega_1 \omega_2 \omega_3 \cdots \nonumber \\ 
           & \ = \cdots 0110.0110 \cdots \nonumber \\
           & \ =  \cdots 1^{(1)} 0^{(1)} 0^{(1)} 1^{(1)}. 0^{(1)} 1^{(1)} 1^{(1)} 0^{(1)} \cdots \nonumber\\
           & \ =  \cdots 0^{(2)} 1^{(2)} 1^{(2)} 0^{(2)}. 0^{(2)} 1^{(2)} 1^{(2)} 0^{(2)} \cdots \nonumber\\
           & \quad \quad \quad \quad \vdots \nonumber\\
           & \ = \left\{ \begin{array}{ll}  \cdots 1^{(n)} 0^{(n)} 0^{(n)} 1^{(n)}. 0^{(n)} 1^{(n)} 1^{(n)} 0^{(n)} \cdots,  \hbox{ if n is odd,}\\
                \cdots 0^{(n)} 1^{(n)} 1^{(n)} 0^{(n)}. 0^{(n)} 1^{(n)} 1^{(n)} 0^{(n)} \cdots, \hbox{ if n is even.} \end{array}
                                \right.
\end{align}

\begin{lem}\label{anotherpath}
Let $\alpha$ be a labeled path in $\CL^*(E_{\Z})$. 
Then both $\alpha^{-1}$ and $\overline{\alpha}$ appear in $\CL^*(E_{\Z})$.  
Furthermore, $r(\alpha)$ is always an infinite set.
\end{lem}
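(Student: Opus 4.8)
The plan is to read all three assertions off the concrete description of $\omega$, using that $\CL^*(E_{\Z})$ is precisely the set of finite factors (blocks) of the two-sided Thue--Morse sequence, since $E_{\Z}$ is a single bi-infinite labelled path. Thus I must show that whenever a block occurs in $\omega$, both its reverse and its complement occur, and that every block occurs at infinitely many positions. I would treat the three claims separately, as they rest on three different symmetries of $\omega$.

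For the reverse $\alpha^{-1}$, I would use the reflection symmetry that is \emph{built into} the definition $\omega_{-i} := \omega_{i-1}$. Setting $j = i-1$, this reads $\omega_{-1-j} = \omega_j$ for all $j$, i.e. $\omega$ is invariant under the reflection $j \mapsto -1-j$ of its index set. Hence if $\alpha = \omega_{[i,\,i+L-1]}$ with $L = |\alpha|$, then reading the reflected block $\omega_{[-(i+L),\,-(i+1)]}$ from left to right and applying $\omega_{-1-k}=\omega_k$ term by term returns $\alpha_L \alpha_{L-1}\cdots\alpha_1 = \alpha^{-1}$. So $\alpha^{-1}$ is a factor of $\omega$, hence in $\CL^*(E_{\Z})$.

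For the complement $\overline{\alpha}$, I would use the block decomposition (\ref{defofmorse}). Fix $n$ with $2^n \ge |\alpha|$; then $\omega$ is tiled by the level-$n$ blocks $0^{(n)},1^{(n)}$, each of length $2^n$, and any occurrence of $\alpha$ lies inside two consecutive tiles $XY$ with $X,Y \in \{0^{(n)},1^{(n)}\}$, say at offset $t$. From (\ref{defofmorse}) one reads off that all four aligned adjacency patterns $0^{(n)}0^{(n)},\,0^{(n)}1^{(n)},\,1^{(n)}0^{(n)},\,1^{(n)}1^{(n)}$ occur (for even $n$ the $0^{(n)}0^{(n)}$ junction appears at the centre, the others on either side). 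In particular $\overline{X}\,\overline{Y}$ occurs as an aligned consecutive pair of tiles; since $\overline{X}\,\overline{Y} = \overline{XY}$, reading at the same offset $t$ yields $\overline{\alpha}$, so $\overline{\alpha}\in\CL^*(E_{\Z})$.

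Finally, for the infinitude of $r(\alpha)$, I note that $r(\alpha)$ (the set of vertices reached by a path labelled $\alpha$) is in bijection with the set of end-positions of occurrences of $\alpha$ in $\omega$, distinct occurrences giving distinct vertices on the single line $E_{\Z}$; so it suffices that $\alpha$ recurs infinitely often. The cleanest route is the digit-sum description $\omega_k = (\text{number of }1\text{'s in the binary expansion of }k)\bmod 2$ for $k\ge 0$, which follows from the recursion defining $0^{(n)},1^{(n)}$: for $0\le k<2^N$ and any $m\ge 1$ there are no carries, so $\omega_{k+m2^N} = \omega_k + (\text{binary digit sum of }m)\bmod 2$, and choosing the infinitely many $m$ with even digit sum reproduces the entire block $\omega_{[0,\,2^N-1]}$ at arbitrarily far positions; for $N$ large this block contains an occurrence of $\alpha$, giving infinitely many well-separated occurrences. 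I expect this last step to be the main obstacle: the symmetry arguments for $\alpha^{-1}$ and $\overline{\alpha}$ are immediate, but the infinite recurrence requires either the digit-sum identity or the self-similarity of the level-$n$ block sequence (which is again Thue--Morse), and one must take care to first place an occurrence of $\alpha$ on the positive axis (its factor set coincides with that of the one-sided sequence $\omega_{[0,\infty)}$) and to produce genuinely distinct, unboundedly far translates; alternatively one may simply invoke minimality of the Thue--Morse subshift.
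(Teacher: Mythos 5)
Your three arguments are sound in outline, and two of them are close to what the paper does. For $\alpha^{-1}$ you use exactly the paper's argument: the reflection $\omega_{-1-j}=\omega_j$ built into the definition gives $\alpha^{-1}=\omega_{[-m-1,-n-1]}$ when $\alpha=\omega_{[n,m]}$. For $\overline{\alpha}$ the paper argues in the same spirit but with a different window: instead of your two-tile window $XY$, it takes $l$ so large that $\alpha$ sits inside the single centered block $\omega_{[-2\cdot 2^{2l},\,2\cdot 2^{2l}-1]}=1^{(2l)}0^{(2l)}0^{(2l)}1^{(2l)}$, which is itself one block $1^{(2l+2)}$ (misprinted as $1^{(2l+1)}$ in the paper), and then complements that whole block, using that $0^{(2l+2)}$ also occurs. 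Your version is equally valid, since all four aligned adjacencies $i^{(n)}j^{(n)}$ are indeed visible in (\ref{defofmorse}). What the paper's choice buys is economy: the same computation settles the third claim at once, because the centered block containing $\alpha$ is a tile of the decomposition (\ref{defofmorse}) and therefore recurs infinitely often -- no digit-sum identity is needed.

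Where you genuinely diverge -- the recurrence argument via $\omega_k=(\hbox{binary digit sum of }k)\bmod 2$ -- the arithmetic is correct, but the loose end you flag is a real gap and you never close it: the digit-sum identity describes only $\omega_{[0,\infty)}$, so you must first know that a factor of the two-sided $\omega$ that straddles the origin or lies on the negative axis also occurs at some position $\geq 0$. Appealing to minimality of the Thue--Morse subshift does not dispose of this, because one must first know that the reflected two-sided $\omega$ is a point of that subshift, which is the same issue in disguise. The fix is already inside your own proposal: your two-tile observation puts any occurrence of $\alpha$ inside an aligned pair $XY$, and all four aligned pairs occur on the positive axis, as one reads from $\omega_{[0,2^{n+3}-1]}=0^{(n+2)}1^{(n+2)}=0^{(n)}1^{(n)}1^{(n)}0^{(n)}1^{(n)}0^{(n)}0^{(n)}1^{(n)}$; hence $\alpha$ occurs positively and your recurrence argument applies. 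Had you made that connection explicit (or simply reused the paper's centered-block computation), the proof would be complete; as written, the third assertion rests on an unproved claim.
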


\begin{proof}
If $\alpha \in \CL^*(E_{\Z})$, then we can find $n, m \in \Z$ with $n < m$ and $\alpha = \omega_{[n, m]}$. 
Thus $\alpha^{-1} = \omega_{[-m-1, -n-1]} $ belongs to $\CL^*(E_{\Z})$. 
Moreover we can choose a large $l > 0$ so that $\alpha = \omega_{[n, m]}$ is a subpath of 
$$\omega_{[-2 \cdot 2^{2l}, 2 \cdot 2^{2l}-1]} = 1^{(2l)} 0^{(2l)} 0^{(2l)} 1^{(2l)} = 1^{(2l+1)}.$$
Then since 
$$\overline{\omega_{[-2 \cdot 2^{2l}, 2 \cdot 2^{2l}-1]}} = \overline{1^{(2l+1)}} = 0^{(2l+1)},$$
the labeled path $\overline{\alpha}$ is in $\CL^*(E_{\Z})$. 
Moreover $1^{(2l+1)}$ appears infinitely many times (see (\ref{defofmorse})), so that $r(\alpha)$ is infinite.
\end{proof}

\begin{remark}\label{overlap}
We emphasize from ~\cite{GH} that the (two-sided) Thue--Morse sequence is {\it overlap-free}: for any $\beta \in \CL^*(E_{\Z})$, the labeled path of the form $\beta \beta \beta_{[1,n]}$ with $n \leq |\beta|$ does not appear in $\CL^*(E_{\Z})$.
Thus $s_{\beta \beta \beta_{[1,n]}} = p_{r(\beta \beta \beta_{[1,n]})} = 0$.
For example, neither $1001000$ nor $1001001 = (100) (100) 1$ appear in the Thue--Morse sequence, and hence $100100 \notin \CL^*(E_{\Z})$.
\end{remark}

\begin{lem}\label{followerset}
If $\alpha$ has length $2^n$ and $i_1^{(n)} i_2^{(n)} \alpha \in \CL^*(E_{\Z})$ {\rm (}or $\alpha \, i_1^{(n)} i_2^{(n)} \in \CL^*(E_{\Z})${\rm )} for $i_1, i_2 = 0, 1$, then $\alpha=i^{(n)}$ for $i = 0$ or $1$. 
\end{lem}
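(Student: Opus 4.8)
The plan is to prove, by induction on $n$, the slightly stronger \emph{recognizability} statement: whenever two consecutive level-$n$ blocks $i_1^{(n)} i_2^{(n)}$ occur in $\omega$, say $\omega_{[t,\,t+2^{n+1}-1]} = i_1^{(n)} i_2^{(n)}$, the starting position is aligned with the self-similar grid, i.e. $t \equiv 0 \pmod{2^n}$. This immediately yields the lemma: once the occurrence of $i_1^{(n)} i_2^{(n)}$ is grid-aligned, the following length-$2^n$ window $\alpha$ is again a grid-aligned block of the decomposition (\ref{defofmorse}), so $\alpha = i^{(n)}$. As a warm-up for the inductive step it is instructive to expand, using $i^{(n)} = i^{(n-1)} \overline{i}^{(n-1)}$,
$$ i_1^{(n)} i_2^{(n)} \alpha = i_1^{(n-1)} \overline{i_1}^{(n-1)} i_2^{(n-1)} \overline{i_2}^{(n-1)} \alpha' \alpha'' , $$
where $\alpha = \alpha' \alpha''$ with $|\alpha'| = |\alpha''| = 2^{n-1}$, and to apply the level-$(n-1)$ conclusion twice: to $i_2^{(n-1)} \overline{i_2}^{(n-1)} \alpha'$ to get $\alpha' = p^{(n-1)}$, and to $\overline{i_2}^{(n-1)} p^{(n-1)} \alpha''$ to get $\alpha'' = q^{(n-1)}$. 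Thus $\alpha = p^{(n-1)} q^{(n-1)}$, and the whole problem reduces to excluding the \emph{square} case $q = p$.

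Excluding the square is where the real work lies. If $i_2 = \overline{p}$ then $\overline{i_2}^{(n-1)} \alpha' \alpha'' = (p^{(n-1)})^3$ is a cube, hence an overlap, which is forbidden by Remark \ref{overlap}; so the only surviving possibility is $i_2 = p$, i.e. an occurrence of $i_1^{(n)} p^{(n)} p^{(n-1)} p^{(n-1)}$. Here I would invoke the inductive (alignment) hypothesis at level $n-1$ on the pair $i_1^{(n-1)} \overline{i_1}^{(n-1)}$ to learn that the occurrence is $2^{n-1}$-grid aligned, so that reading off the sequence of level-$(n-1)$ block labels $\dots a_{k-1} a_k a_{k+1} \dots$ of $\omega$ makes sense, our six blocks carrying labels $a_k \dots a_{k+5} = i_1\, \overline{i_1}\, p\, \overline{p}\, p\, p$. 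By (\ref{defofmorse}) the pairs coming from level-$n$ blocks, namely $a_{2j} a_{2j+1}$, always alternate. If $k$ were even the pair $a_{k+4} a_{k+5} = pp$ would have to alternate, a contradiction; hence $k$ is odd. But then the even pairs $(a_{k-1} a_k)$, $(a_{k+1} a_{k+2})$, $(a_{k+3} a_{k+4})$ alternate, which forces $i_1 = i_2 = p$ and, after desubstituting each such pair to its level-$n$ label (the first letter of the pair), produces three equal consecutive level-$n$ labels $\overline{i_1}\,\overline{i_1}\,\overline{i_1}$; this is a cube $(\overline{i_1}^{(n)})^3$ in $\omega$, again forbidden by Remark \ref{overlap}. (The leftmost label $a_{k-1}$ exists because $\omega$ is bi-infinite.)

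The main obstacle is precisely this last step: passing from $2^{n-1}$-alignment to full $2^n$-alignment, equivalently ruling out a level-$n$-\emph{misaligned} square $p^{(n-1)} p^{(n-1)}$ sitting immediately after two genuine level-$n$ blocks. Everything before it is bookkeeping with the recursion $i^{(n)} = i^{(n-1)} \overline{i}^{(n-1)}$ and two uses of the inductive conclusion; the delicate point is tracking the parity of the starting block-index $k$ and passing correctly to the desubstituted label sequence, where overlap-freeness finally bites in the form of a forbidden cube. Finally, the parenthetical case $\alpha\, i_1^{(n)} i_2^{(n)} \in \CL^*(E_{\Z})$ follows by the same argument applied at the right end, or more quickly by reversal: since $(i^{(n)})^{-1}$ is again a level-$n$ block, Lemma \ref{anotherpath} turns the reversed word into one of the form $j_1^{(n)} j_2^{(n)} \alpha^{-1}$, to which the first case applies, and $\alpha = (\alpha^{-1})^{-1}$ is then a level-$n$ block as well.
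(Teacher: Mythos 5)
Your strategy --- proving the stronger recognizability statement that every occurrence of two consecutive level-$n$ blocks is aligned to the $2^n$-grid, and deducing the lemma from that --- is genuinely different from the paper's proof and is mathematically viable, but as written your induction does not close. The statement you claim to induct on is Alignment$(n)$: an occurrence of $i_1^{(n)} i_2^{(n)}$ at position $t$ forces $2^n \mid t$. However, the inductive step you actually carry out (the warm-up reduction to $\alpha = p^{(n-1)} q^{(n-1)}$ plus the exclusion of the square $q = p$) establishes only the \emph{lemma} at level $n$ from Alignment and the lemma at level $n-1$; nowhere do you establish Alignment$(n)$ itself, which is exactly what the next application of your inductive step requires (you invoke it on the pair $i_1^{(n-1)} \overline{i_1}^{(n-1)}$). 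Read literally, your argument proves Alignment$(n-1)$ together with Lemma$(n-1)$ implies Lemma$(n)$, and then the chain stops.

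The fix is short, because it is the argument you already wrote, applied to an arbitrary occurrence rather than to the square configuration: if $i_1^{(n)} i_2^{(n)}$ occurs at $t = k 2^{n-1}$ (using alignment at level $n-1$) and $k$ were odd, then the level-$n$ grid pairs $(a_{k-1} a_k)$, $(a_{k+1} a_{k+2})$, $(a_{k+3} a_{k+4})$ must alternate, forcing $a_{k-1} = \overline{i_1}$, $i_2 = i_1$ and $a_{k+4} = i_1$; the three consecutive level-$n$ labels $a_{k-1}, a_{k+1}, a_{k+3}$ then all equal $\overline{i_1}$, so $(\overline{i_1}^{(n)})^3$ occurs in $\omega$, a cube, contradicting Remark \ref{overlap}. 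Hence $k$ is even and Alignment$(n)$ holds. Once you insert this, the induction closes --- and in fact your warm-up and square exclusion become redundant, since the lemma follows immediately from alignment, exactly as you say in your opening paragraph.

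For comparison, the paper inducts on the lemma itself: the inductive hypothesis is applied twice (this is literally your warm-up) to get $\alpha_1 \alpha_2 = j_1^{(N)} j_2^{(N)}$, and the square cases are killed by exhibiting forbidden words directly --- $0^{(N)} 1^{(N)} 0^{(N)} 1^{(N)} 0^{(N)} 0^{(N)}$ contains the overlap $(0^{(N)} 1^{(N)})(0^{(N)} 1^{(N)}) 0^{(N)}$, and $1^{(N)} 0^{(N)} 0^{(N)} 1^{(N)} 0^{(N)} 0^{(N)}$ is the blown-up $100100$ of Remark \ref{overlap} --- so no alignment statement is ever needed. Your route costs more bookkeeping but yields a stronger, reusable fact (recognizability of the Thue--Morse substitution); the paper's stays entirely at the level of forbidden words. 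Your reversal argument for the case $\alpha \, i_1^{(n)} i_2^{(n)}$ coincides with the paper's.
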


\begin{proof}
We use an induction on $n$. 
When $n = 0$ or $1$, it is easy to see that the assertion is true.
Now we assume that the assertion holds for all $n \leq N$ ($N \geq 1$), and let 
$$\beta = i_1^{(N+1)} i_2^{(N+1)} \alpha \in \CL^*(E_{\Z})$$
with $|\alpha| = 2^{N+1}$. 
Then $\beta$ can be written as
$$\beta = i_1^{(N+1)} i_2^{(N+1)} \alpha = i_1^{(N)} \overline{i_1}^{(N)} i_2^{(N)} \overline{i_2}^{(N)} \alpha_1 \alpha_2$$
with $|\alpha_1| = |\alpha_2| = 2^N$.
From the induction hypothesis, $\alpha_1 \alpha_2$ must be of the form $j_1^{(N)} j_2^{(N)}$.
Thus it is enough to show that $\alpha_1 \alpha_2$ is neither $0^{(N)} 0^{(N)}$ nor $1^{(N)} 1^{(N)}$.

Suppose that $\alpha_1 \alpha_2 = 0^{(N)} 0^{(N)}$. 
Since $0^{(N)} 0^{(N)} 0^{(N)} \notin \CL^*(E_{\Z})$, the block $i_2^{(N)} \overline{i_2}^{(N)}$ must be $0^{(N)} 1^{(N)}$. 
If $i_1^{(N)} \overline{i_1}^{(N)} = 0^{(N)} 1^{(N)}$, then 
$$\beta = 0^{(N)} 1^{(N)} 0^{(N)} 1^{(N)} 0^{(N)} 0^{(N)}$$
and such $\beta$ can not be contained in $\CL^*(E_{\Z})$ (see Remark ~\ref{overlap}). 
If $i_1^{(N)} \overline{i_1}^{(N)} = 1^{(N)} 0^{(N)}$, we have
$$\beta = 1^{(N)} 0^{(N)} 0^{(N)} 1^{(N)} 0^{(N)} 0^{(N)},$$
but then $\beta \notin \CL^*(E_{\Z})$ again.
Therefore we obtain that $\alpha_1 \alpha_2 \neq 0^{(N)} 0^{(N)}$ and it follows that $\alpha_1 \alpha_2 \neq 1^{(N)} 1^{(N)}$ by considering $\overline{\beta}$. 
Hence $\alpha$ is either $0^{(N+1)}$ or $1^{(N+1)}$.

Finally, the case $\alpha \, i_1^{(n)} i_2^{(n)}$ can be done from $(\alpha \, i_1^{(n)} i_2^{(n)})^{-1} = j_1^{(n)} j_2^{(n)} \alpha^{-1} \in \CL^*(E_{\Z})$.
\end{proof}

Lemma ~\ref{followerset} implies that if we choose a long labeled path (long enough to contain at least two consecutive $i^{(n)}$-blocks), then its succeeding and preceding labeled paths must have certain forms. 
This is no longer true in general, for example $0^{(n)} \alpha \in \CL^*(E_{\Z})$ with $|\alpha| = 2^n$ does not imply $\alpha = 0^{(n)}$ or $1^{(n)}$ because $0^{(n-1)} 1^{(n-1)} 0^{(n-1)} 0^{(n-1)} = 0^{(n)} \alpha \in \CL^*(E_{\Z})$.  

One can observe from the definition of the Morse sequence that $\omega_{[2k, 2k+1]}$ is neither $00$ nor $11$.
Also any labeled path of length 5 must contain $00$ or $11$.
Thus if $\alpha = \alpha_1 \dots \alpha_{|\alpha|}$ is a path of length $|\alpha| \geq 5$, then we can actually figure out whether $\alpha_1 = \omega_{2k}$ or not (namely, $\alpha_1 = \omega_{2k+1}$) for some $k$.
Therefore if $\alpha = \alpha_1 \dots \alpha_5$, then $\alpha$ can be uniquely written in the form using $i^{(1)}$-blocks.  
For example, $\alpha = 01101$ then
$$
\alpha = (01) (10) 1 = 0^{(1)} 1^{(1)} 1
$$
whereas $\alpha = 0 (11) (01) \neq 0 \, i_1^{(1)} i_2^{(1)}$ for any $i_1, i_2 \in \{0, 1\}$.
We can extend this fact to the $i^{(n)}$-blocks for $n \geq 1$.

\begin{lem}\label{fiveblocks}
Let $\alpha \in \CL^*(E_{\Z})$ be a path of the form
$${i_1}^{(n)} {i_2}^{(n)} {i_3}^{(n)} {i_4}^{(n)} {i_5}^{(n)} $$
for some $n > 0$. 
Then $\alpha$ is written as one and only one of the following forms: 
\begin{equation}\label{rewriting}
{i_1}^{(n+1)} {i_3}^{(n+1)} {i_5}^{(n)} \hbox { \ or \ } {i_1}^{(n)} {i_2}^{(n+1)} {i_4}^{(n+1)}. 
\end{equation}
\end{lem}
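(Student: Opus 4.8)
The plan is to show that the only ambiguity in grouping the five $i^{(n)}$-blocks is the ``phase'' of the decomposition relative to the coarser scale-$2^{n+1}$ grid of the Thue--Morse substitution $\mu\colon 0\mapsto 01,\ 1\mapsto 10$ (so that $\mu^n(i)=i^{(n)}$), and that this phase is forced. I first record the self-similarity already displayed in (\ref{defofmorse}): a pair of adjacent grid-aligned $i^{(n)}$-blocks whose left block sits at an \emph{even} position of the scale-$2^n$ grid is a single scale-$2^{n+1}$ block $j^{(n+1)}=j^{(n)}\overline{j}^{(n)}$, so its two halves carry opposite labels, whereas at an odd grid position the two labels may agree. (This ``even pairs are unequal'' statement holds on both halves of $\omega$, since $\omega_{2p}\neq\omega_{2p+1}$ for every $p\in\Z$.) Granting this, the entire lemma reduces to two things: that the given decomposition $i_1^{(n)}\cdots i_5^{(n)}$ is grid-aligned at scale $2^n$, followed by a parity count.

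The main step, and the principal obstacle, is the grid-alignment: whenever two adjacent blocks $i_1^{(n)}i_2^{(n)}$ form a word of $\CL^*(E_{\Z})$, every occurrence in $\omega$ begins at a multiple of $2^n$. I would prove this by induction on $n$, extending the length-$5$ observation recorded just before the statement (the base case $n\le 1$). For the inductive step I write $i^{(n)}=i^{(n-1)}\overline{i}^{(n-1)}$, so $i_1^{(n)}i_2^{(n)}$ is four adjacent $i^{(n-1)}$-blocks; by the inductive hypothesis its left end is a multiple of $2^{n-1}$, say $N=m\,2^{n-1}$. Reading off the grid-aligned $i^{(n-1)}$-labels in the six positions $m-1,\dots,m+4$ and applying the even-pair fact, the assumption that $m$ is odd forces these labels to alternate; that is, it forces three consecutive equal grid-aligned blocks $i^{(n)}i^{(n)}i^{(n)}=\beta\beta\beta$ with $\beta=i^{(n)}$, which is an overlap and cannot occur by Remark~\ref{overlap}. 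Hence $m$ is even and $N$ is a multiple of $2^n$. The reflection in the two-sided convention $\omega_{-i}=\omega_{i-1}$ makes the label--content dictionary differ by a bar on the negative half; this is harmless because $\CL^*(E_{\Z})$ is closed under $\alpha\mapsto\alpha^{-1}$ by Lemma~\ref{anotherpath}, so it suffices to argue on the one-sided part, where ``three equal blocks'' is literally a cube $\overline{i}^{(n)}\,\overline{i}^{(n)}\,\overline{i}^{(n)}$ in $\omega$.

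With grid-alignment in hand, $\alpha=i_1^{(n)}\cdots i_5^{(n)}$ occupies grid positions $k,\dots,k+4$ for some integer $k$. If $k$ is even, the pairs at positions $(k,k+1)$ and $(k+2,k+3)$ are each a single scale-$2^{n+1}$ block, so $i_2=\overline{i_1}$ and $i_4=\overline{i_3}$, giving $i_1^{(n)}i_2^{(n)}=i_1^{(n+1)}$ and $i_3^{(n)}i_4^{(n)}=i_3^{(n+1)}$, i.e.\ the first form $i_1^{(n+1)}i_3^{(n+1)}i_5^{(n)}$; if $k$ is odd, the same count applied to the pairs at $(k+1,k+2)$ and $(k+3,k+4)$ yields the second form $i_1^{(n)}i_2^{(n+1)}i_4^{(n+1)}$. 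Exactly one of these holds, since the two are mutually exclusive: if both held, the labels would alternate as $i_1\,\overline{i_1}\,i_1\,\overline{i_1}\,i_1$, whence $\alpha=\beta\beta\,\beta_{[1,2^n]}$ with $\beta=i_1^{(n+1)}$ and $2^n\le|\beta|$, again an overlap forbidden by Remark~\ref{overlap}. This completes the scheme; the one genuinely substantive ingredient is the grid-alignment induction, which is precisely the recognizability of the Thue--Morse substitution packaged through overlap-freeness.
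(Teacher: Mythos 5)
Your proof is correct, and its kernel --- pin down the parity of the occurrence against the coarser grid and read off the bracketing --- is the same idea the paper invokes when its one-sentence proof says the lemma ``directly follows by substituting $i^{(n)}$ for $i$'' in the length-$5$ discussion preceding the statement. The genuine difference is that you isolate and prove the step that one sentence takes for granted: that an occurrence of $i_1^{(n)}i_2^{(n)}$ in $\omega$ must begin at a multiple of $2^n$, so that the block labels $i_1\cdots i_5$ really form a length-$5$ factor of the Thue--Morse sequence to which the parity count applies. Your grid-alignment induction (misalignment plus the even-pair fact forces three equal adjacent grid blocks, a cube, contradicting Remark~\ref{overlap}) is precisely recognizability of the Thue--Morse substitution, obtained by elementary means, and it is not removable scaffolding: overlap-freeness of $\alpha$ alone cannot give the existence half of (\ref{rewriting}), since for example $0^{(1)}0^{(1)}1^{(1)}0^{(1)}0^{(1)}=0101100101$ is overlap-free as a word yet admits neither bracketing --- what excludes it as a factor of $\omega$ is exactly desubstitution, under which its label word $00100$ would have to be a factor of $\omega$, and it is not (its two occurrences of $00$ would need incompatible parities). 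So your route buys a complete argument where the paper has an assertion. Two small points: the base case of your induction is cleanest taken at $n=0$, where alignment is vacuous and the cube argument then delivers $n=1$ (length-$4$ words such as $0101$ contain neither $00$ nor $11$, so the paper's length-$5$ observation does not apply to them verbatim); and the passage between the two-sided and one-sided sequences is most directly justified by the observation inside the proof of Lemma~\ref{anotherpath} that every factor of $\omega$ sits inside some $1^{(2l+1)}$, which occurs at positive positions.
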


\begin{proof}
The proof directly follows by substituting $i^{(n)}$ for $i$.
\end{proof}

By Lemma ~\ref{followerset} and Lemma ~\ref{fiveblocks}, we can write any labeled path in $\CL^*(E_{\Z})$ using $i^{(n)}$-blocks. 

\begin{prop}\label{prop-newform}
Let $\alpha$ be a labeled path with length $|\alpha| \geq 2$.
Then there exists $n \geq 0$ such that
\begin{equation}\label{newform}
\alpha = \gamma_0  i_1^{(n)} \dots i_k^{(n)} \gamma_1 , \quad 2 \leq k \leq 4,
\end{equation}
where $\gamma_0$ is a final subpath of $0^{(n)}$ or $1^{(n)}$, and $\gamma_1$ is an initial subpath of $0^{(n)}$ or $1^{(n)}$ with length $0 \leq |\gamma_0|, |\gamma_1| < 2^n$ {\rm(}$n$ is not necessarily unique{\rm)}.
Moreover, if we fix such an $n$, the expression of $\alpha$ in $i^{(n)}$-blocks is unique. 
\end{prop}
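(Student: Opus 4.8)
The plan is to prove both the \emph{existence} of an expression of the form (\ref{newform}) and, for a fixed admissible $n$, its \emph{uniqueness}, reducing everything to the block-decomposition lemmas already established. Throughout I would use the self-similar structure (\ref{defofmorse}): at any scale $n$, the sequence $\omega$ is a concatenation of $i^{(n)}$-blocks aligned on a grid of spacing $2^n$, and the coarse labeling (which block is which) is again governed by a Thue--Morse pattern (hence overlap-free by Remark~\ref{overlap}, read at scale $n$).

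For \textbf{existence}, I would argue as follows. Since $|\af|\geq 2$, choose $n\geq 0$ as large as possible subject to $2^{n+1}\leq |\af|$ (roughly $n=\lfloor\log_2|\af|\rfloor-1$); this guarantees $|\af|<2^{n+2}$, so that after peeling off the leading partial block the number of full $i^{(n)}$-blocks is at most $4$. Concretely, locate $\af$ inside $\om$ via Lemma~\ref{anotherpath}, i.e.\ write $\af=\om_{[p,q]}$. The grid of $2^n$-spaced boundaries partitions $[p,q]$ into a (possibly empty) initial fragment $\gm_0$ that completes a block of $\om$, a middle run of full $i^{(n)}$-blocks $i_1^{(n)}\cdots i_k^{(n)}$, and a trailing fragment $\gm_1$ that begins a block. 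Because each full block of $\om$ at scale $n$ is exactly $0^{(n)}$ or $1^{(n)}$, the fragment $\gm_0$ is a final subpath of one of these (its preceding full block), and $\gm_1$ is an initial subpath of one of these; both have length $<2^n$ by construction. The choice of $n$ forces $2\leq k\leq 4$: the length bound $|\af|<2^{n+2}$ caps $k$ at $4$, while $2^{n+1}\leq|\af|$ forces at least two full blocks once the fragments are accounted for (here I would invoke Lemma~\ref{fiveblocks} to fuse any five-block run into fewer blocks at scale $n{+}1$ if the naive count exceeded $4$, and Lemma~\ref{followerset} to pin down forced block identities). This yields (\ref{newform}).

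For \textbf{uniqueness with $n$ fixed}, suppose $\af$ admits two expressions in $i^{(n)}$-blocks with the same $n$:
\[
\af=\gm_0\, i_1^{(n)}\cdots i_k^{(n)}\,\gm_1
  =\gm_0'\, j_1^{(n)}\cdots j_{k'}^{(n)}\,\gm_1'.
\]
The key point is that $\gm_0$ and $\gm_0'$ are both final subpaths of a single $2^n$-block of $\om$ anchored at the \emph{same} position $p=s(\af)$, so they are determined by the block-boundary grid, which is intrinsic to $\om$ and independent of the chosen representation; hence $|\gm_0|=|\gm_0'|$ and therefore $\gm_0=\gm_0'$. Once the left fragment is fixed, the full-block boundaries coincide, so $k=k'$ and $\gm_1=\gm_1'$ by counting lengths. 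It remains to see the block labels agree: $i_t^{(n)}$ and $j_t^{(n)}$ occupy the identical sub-interval of $\om$, and since $0^{(n)}\neq 1^{(n)}$ (they differ already in the last symbol), the label of each block is uniquely read off from $\om$. This gives $i_t=j_t$ for all $t$, proving uniqueness.

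The \textbf{main obstacle} is making the block-boundary grid genuinely well defined as a function of $\af$ alone, not of an arbitrary embedding $\af=\om_{[p,q]}$: a priori $\af$ might occur at several positions $p$ in $\om$ with different residues $p \bmod 2^n$, which would shift the grid and spoil uniqueness of $\gm_0$. The resolution is exactly the parity/alignment phenomenon highlighted in the text before Lemma~\ref{fiveblocks}: once $|\af|\geq 5$ (and, at scale $n$, once $\af$ contains two consecutive $i^{(n)}$-blocks, which the existence step guarantees since $k\geq 2$), the occurrence of forbidden patterns $00,11$ together with overlap-freeness forces the parity of the block boundaries to be recoverable from $\af$ itself. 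I would therefore isolate a short alignment lemma (or cite the displayed discussion preceding Lemma~\ref{fiveblocks}, promoted to scale $n$ via the substitution $i\mapsto i^{(n)}$ as in Lemma~\ref{fiveblocks}) asserting that the coset $p\bmod 2^n$ is an invariant of $\af$; granting this, all positions $p$ realizing $\af$ share the same grid, and the uniqueness argument above goes through verbatim.
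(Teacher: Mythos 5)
Your proposal assembles the right ingredients (Lemmas~\ref{followerset} and \ref{fiveblocks}, plus the scale-$1$ fact that $\omega_{[2k,2k+1]}\neq 00,11$), but both halves of the argument have genuine gaps as written.

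\emph{Existence.} Your choice of $n$ (maximal with $2^{n+1}\leq|\alpha|$) does not force $k\geq 2$. The two fragments can absorb up to $2(2^n-1)$ symbols, so the length bound only guarantees $k\geq 1$, and the failure is real rather than an artifact of the estimate: take $\alpha=\omega_{[1,4]}=1101$, so $|\alpha|=4$ and your rule picks $n=1$. The grid decomposition of this occurrence is $1\cdot 1^{(1)}\cdot 1$ with $k=1$, and in fact \emph{no} expression $\gamma_0\,i_1^{(1)}\cdots i_k^{(1)}\gamma_1$ with $k\geq 2$ exists, since that would force $1101=i_1^{(1)}i_2^{(1)}$ while $11\notin\{01,10\}$; only $n=0$ works for this $\alpha$. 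The same undershooting occurs at every scale (e.g.\ misaligned occurrences of length exactly $2^{n+1}$ with $|\gamma_0|=2^n-1$, $|\gamma_1|=1$). Your patch via Lemma~\ref{fiveblocks} repairs only overshooting ($k\geq5$), not undershooting. The paper goes in the opposite direction: it starts at a \emph{fine} scale $m$ where $6\cdot 2^m\leq|\alpha|\leq 6\cdot 2^{m+1}$ guarantees between $5$ and $12$ full $i^{(m)}$-blocks, then coarsens with Lemmas~\ref{followerset} and \ref{fiveblocks} until the count lands in $[2,4]$; repairing your argument amounts to doing exactly this.

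\emph{Uniqueness.} You correctly identify the alignment problem as ``the main obstacle,'' but you then assume it away: the ``short alignment lemma'' (that the offset mod $2^n$ is an invariant of $\alpha$ itself) is essentially equivalent to the uniqueness assertion being proved, and it is never proved. ``Promote the scale-$1$ discussion via the substitution $i\mapsto i^{(n)}$'' is a gesture, not an argument: to carry it out one must show that every factorization of $\alpha$ of the form (\ref{newform}) and every occurrence of $\alpha$ in $\omega$ induce tilings of $\omega$ by scale-$n$ blocks (via Lemma~\ref{followerset} applied in both directions), and that two misaligned such tilings of $\omega$ are impossible --- and the proof of that impossibility is a descent through the scales to the $00/11$ fact, which is precisely the paper's induction: two distinct scale-$n$ expressions expand, via Lemma~\ref{fiveblocks}, to two distinct scale-$(n-1)$ expressions, and so on down to $i^{(1)}$-blocks, where the parity fact gives the contradiction. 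So the deferred lemma is not a routine verification; it is the heart of the proposition. Note also that the invariance is genuinely false for single blocks --- $1^{(1)}=10$ occurs in $\omega$ at both parities, e.g.\ inside $\omega_{[10,13]}=0101$ --- so any correct proof must use $k\geq 2$ in an essential way; your parenthetical acknowledges this hypothesis but the sketch never actually exploits it.
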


\begin{proof}
For $2 \leq |\alpha| \leq 4$, let $n=0$. 
If $|\alpha| = 5$, Lemma ~\ref{fiveblocks} shows that $n=1$ is the desired one. 

Now we assume that $6 \cdot 2^n \leq |\alpha| \leq 6 \cdot 2^{n+1}$ for $n \geq 0$.
Since $|i^{(n)}| = 2^n$ and $6 \leq |\alpha| / |i^{(n)}| \leq 12$, $\alpha$ contains at least $5$ and at most $12$ of $i^{(n)}$-blocks.
Thus $\alpha$ can be written as 
$$\alpha = \mu_0 i_1^{(n)} \dots i_k^{(n)} \mu_1 , \quad 5 \leq k \leq 12.$$
Applying Lemma ~\ref{followerset} and Lemma ~\ref{fiveblocks}, we can reduce the number of $i^{(n)}$-blocks to have
$$\alpha = \mu'_0 j_1^{(n+1)} \dots j_l^{(n+1)} \mu'_1 , \quad 2 \leq l \leq 6.$$
If $l \geq 5$, then we apply lemmas again and we see that $\alpha$ contains at least two and at most four blocks of $0^{(n)}$, $1^{(n)}$. 
Lemma ~\ref{followerset} implies that $\gamma_0$ and $\gamma_1$ are subpaths of $0^{(n)}$, $1^{(n)}$. 

We now prove the uniqueness of the expression. 
If $\alpha$ has two different expressions in $i^{(n)}$-blocks as in (\ref{newform}) for $n \geq 2$, then by Lemma ~\ref{fiveblocks}, $\alpha$ must have two different expressions in $i^{(n-1)}$-blocks. 
By induction, we conclude that $\alpha$ can be written as two different forms in $i^{(1)}$-blocks.
This contradicts to the fact that any labeled path $\alpha$ with $|\alpha| \geq 5$ must contain $00$ or $11$, and so it can be written in $i^{(1)}$-blocks is unique.
\end{proof}

\begin{ex}\label{ex-newform}

Let $\alpha = 001 0110 1001 0110 0110 1001 1001 0110 0$ be a labeled path in $\CL^*(E_{\Z})$. 
Note that $\alpha_{[1,2]} = 00$ is not equal to $0^{(1)}$ or $1^{(1)}$. 
Thus by Lemma ~\ref{fiveblocks}, $\alpha$ can be written in the following form using $i^{(1)}$-blocks 
\begin{align*} 
\alpha & \ =  001 0110 1001 0110 0110 1001 1001 0110 0\\ 
           & \ = 0 / 01 /01/10 /10/01/ 01/10/ 01/10 /10/01/ 10/01/ 01/10 /0 \\
          & \ = 0 / 0^{(1)} 0^{(1)} 1^{(1)} 1^{(1)} 0^{(1)} 0^{(1)} 1^{(1)} 0^{(1)} 1^{(1)} 1^{(1)} 0^{(1)} 1^{(1)} 0^{(1)} 0^{(1)} 1^{(1)} /0.
\end{align*}
(Note here that two consecutive $i_1^{(n)} i_2^{(n)}$ blocks between `/' form a $i_1^{(n+1)}$ block.)
Applying Lemma ~\ref{fiveblocks} repeatedly, we obtain 
\begin{align*} 
\alpha & \ = 0 / 0^{(1)} 0^{(1)} 1^{(1)} 1^{(1)} 0^{(1)} 0^{(1)} 1^{(1)} 0^{(1)} 1^{(1)} 1^{(1)} 0^{(1)} 1^{(1)} 0^{(1)} 0^{(1)} 1^{(1)} /0\\
            & \ = 0 0^{(1)} / 0^{(1)} 1^{(1)} / 1^{(1)} 0^{(1)} / 0^{(1)} 1^{(1)} / 0^{(1)} 1^{(1)} / 1^{(1)} 0^{(1)} / 1^{(1)} 0^{(1)} / 0^{(1)} 1^{(1)} /0 \\
           & \ = 0 0^{(1)} / 0^{(2)}  1^{(2)}   0^{(2)}   0^{(2)}  1^{(2)}  1^{(2)}  0^{(2)} /0\\
           & \ = 0 0^{(1)} 0^{(2)} / 1^{(2)}  0^{(2)}  / 0^{(2)} 1^{(2)}  / 1^{(2)}  0^{(2)} /0\\
           & \ = 0 0^{(1)} 0^{(2)} / 1^{(3)}   0^{(3)}  1^{(3)} /0\\
           & \ = \gamma_0  1^{(3)}   0^{(3)}  1^{(3)} \gamma_1,
\end{align*}           
where $\gamma_0 = 0 0^{(1)} 0^{(2)} = 0010010$ is a final subpath of $1^{(3)} = 10010110$, and $\gamma_1 = 0$ is an initial subpath of $0^{(3)} = 01101001$.
If $\alpha \beta_1 \in \CL^*(E_{\Z})$ for some labeled path $\beta_1$ of length $2^3 -1$, then by Lemma ~\ref{followerset}, $\gamma_1 \beta_1 = 0 \beta_1 = i^{(3)}$ for some $i = 0$ or $1$. 
Thus $0 \beta_1 = 0^{(3)}$. 
Similarly, if $\beta_0 \alpha \in \CL^*(E_{\Z})$ with $|\beta_0| = 1$ then $\beta_0 \gamma_0 = \beta_0 0 0^{(1)} 0^{(2)} = 1^{(3)}$, and we have $\beta_0 = 1$.
Thus
$$
\beta_0 \alpha \beta_1  \ =  1^{(3)} 1^{(3)} 0^{(3)} 1^{(3)} 0^{(3)}.
$$
Since $\omega_{[0,63]}  \ = 0^{(3)} 1^{(3)} 1^{(3)} 0^{(3)} 1^{(3)} 0^{(3)} 0^{(3)} 1^{(3)}$, we now see that $\beta_0 \alpha \beta_1 = \omega_{[8, 47]}$ and $\alpha = \omega_{[9,40]}$.
\end{ex}

\begin{notation}
We define $\CA^m \alpha \CA^n$ to be the set 
$$
\CA^m \alpha \CA^n := \{\beta_0 \alpha \beta_1 \in \CL^*(E_{\Z}) : |\beta_0| = m, |\beta_1| = n  \}
$$
for $m, n \geq 0$.
We identify $\CA^m \alpha$ with $\CA^m \alpha \CA^0$ and $\alpha \CA^n$ with $\CA^0 \alpha \CA^n$.
\end{notation}

We finish this section by proving the following lemma which is necessary in the next section to show that $C^*(E_{\Z}, \CL, \bE_{\Z})$ has a trace.

\begin{lem}\label{extension-number}
Let $\alpha$ be a labeled path with length $|\alpha| \geq 2$. Then the cardinality of the set $\CA \alpha \CA$ is $1, 2$, or $4$.
\end{lem}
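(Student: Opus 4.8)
The plan is to pass to the set $E(\alpha) := \{(a,b) \in \CA\times\CA : a\alpha b \in \CL^*(E_{\Z})\}$, which is in bijection with $\CA\alpha\CA$. Since $\CA = \{0,1\}$ there are only four candidate pairs, so $|E(\alpha)| \le 4$, and since $\alpha$ occurs in the bi-infinite sequence $\omega$ it has at least one left and one right neighbour, giving $|E(\alpha)| \ge 1$. Thus everything reduces to ruling out $|E(\alpha)| = 3$. A triple of pairs must use both values in each coordinate, so $|E(\alpha)| = 3$ forces $\alpha$ to be bispecial, i.e. both $0\alpha, 1\alpha$ and both $\alpha 0, \alpha 1$ lie in $\CL^*(E_{\Z})$; hence it is enough to treat bispecial $\alpha$ with $|\alpha| \ge 2$.

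First I would use the canonical form of Proposition \ref{prop-newform}: write $\alpha = \gamma_0 i_1^{(n)}\cdots i_k^{(n)} \gamma_1$ with $2 \le k \le 4$, where $\gamma_0, \gamma_1$ are a proper suffix and a proper prefix of a block. The elementary observation I would record is that $0^{(n)}$ and $1^{(n)}$ have distinct first letters and distinct last letters. Consequently, if $\gamma_1 \ne \emptyset$, then $\gamma_1$ is a proper prefix of exactly one block $j^{(n)}$; completing that block to the right and invoking Lemma \ref{followerset} on the two consecutive blocks $i_{k-1}^{(n)} i_k^{(n)}$ forces the letter following $\alpha$, so $\alpha$ has a unique right extension and $|E(\alpha)| \le 2$. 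The symmetric argument, using distinct last letters, handles $\gamma_0 \ne \emptyset$. Since a bispecial $\alpha$ has two right and two left extensions, it must satisfy $\gamma_0 = \gamma_1 = \emptyset$, i.e. $\alpha = w^{(n)} := i_1^{(n)}\cdots i_k^{(n)}$ for a factor $w = i_1\cdots i_k \in \CL^*(E_{\Z})$ of length $k \in \{2,3,4\}$.

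For such $\alpha = w^{(n)}$ I would reduce the count to that of the block word $w$. Because $\alpha$ contains the two consecutive blocks $i_1^{(n)} i_2^{(n)}$, every occurrence of $\alpha$ is block-aligned, so the letter $a$ preceding $\alpha$ is the last letter of the preceding block and the letter $b$ following $\alpha$ is the first letter of the following block; as $j \mapsto \mathrm{last}(j^{(n)})$ and $j \mapsto \mathrm{first}(j^{(n)})$ are bijections of $\CA$, this produces a bijection $E(w) \to E(\alpha)$ and hence $|E(\alpha)| = |E(w)|$. It then remains only to inspect the finitely many factors $w$ of length $2,3,4$ and check that $|E(w)| \in \{1,2,4\}$: the bispecial short factors are $01,10,0110,1001$ (all four extensions occur) and $010,101$ (exactly two occur), and none has exactly three. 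Note that the only factors with $|E|=3$ are the single letters $0$ and $1$, which are excluded precisely by $|\alpha| \ge 2$, equivalently $k \ge 2$.

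The \textbf{main obstacle} is the block-alignment claim used in the last step: one must show that a factor containing at least two consecutive $i^{(n)}$-blocks can occur in $\omega$ only at block boundaries, so that its one-letter neighbours are genuinely governed by the adjacent complete blocks. This is a recognizability statement for the Thue--Morse substitution, which I expect to extract from the overlap-freeness recorded in Remark \ref{overlap} together with the rigidity of Lemma \ref{followerset}. It is exactly this alignment that fails for a single letter, which occurs at both parities; that is why $0$ and $1$ are genuinely neutral and why the restriction $|\alpha| \ge 2$ is essential.
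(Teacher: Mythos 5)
Your first two reductions coincide with the paper's own proof: the paper likewise begins by showing that $|\CA\alpha\CA|\geq 3$ forces all four of $0\alpha,\,1\alpha,\,\alpha 0,\,\alpha 1$ to lie in $\CL^*(E_{\Z})$, and then uses the canonical form of Proposition~\ref{prop-newform} (a nonempty $\gamma_1$ determines the letter following $\alpha$, and symmetrically for $\gamma_0$) to conclude that any path with three or more two-sided extensions must be a pure block word $i_1^{(n)}\cdots i_k^{(n)}$, $2\leq k\leq 4$. The divergence is in the endgame. The paper finishes by a direct verification, asserted as ``easy but time-consuming,'' that the only such block words with $|\CA\alpha\CA|\geq 3$ are $0^{(n)}1^{(n)}$, $1^{(n)}0^{(n)}$, $0^{(n)}1^{(n)}1^{(n)}0^{(n)}$, $1^{(n)}0^{(n)}0^{(n)}1^{(n)}$, each with exactly four extensions. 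You instead propose to identify $\CA\, w^{(n)}\,\CA$ with $\CA w \CA$ by de-substitution and then inspect only the finitely many factors $w$ of length $2,3,4$ at level zero; your final list (the bispecial $01,10,0110,1001$ with four extensions, $010,101$ with two, and only the single letters with three) is correct.

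However, as written your proof has a genuine gap exactly where you flag it: the block-alignment (recognizability) claim carries the entire weight of the surjectivity of your correspondence $E(w)\to E(w^{(n)})$, and it is not proved. Citing Remark~\ref{overlap} and Lemma~\ref{followerset} as raw materials is not a proof; Lemma~\ref{followerset} constrains what can follow two consecutive blocks, but by itself says nothing about where a block word may sit relative to the $2^n$-block decomposition (\ref{defofmorse}) of $\omega$, and if misaligned occurrences existed, the one-letter neighbours of $\alpha$ would not be governed by adjacent complete blocks and your count $|E(\alpha)|=|E(w)|$ would collapse. The claim is true, and the tool needed is already in the paper, namely the uniqueness clause of Proposition~\ref{prop-newform}, which you never invoke: if $i_1^{(n)}i_2^{(n)}$ occurred at a position $\not\equiv 0 \pmod{2^n}$, then the three consecutive blocks of $\omega$ meeting that occurrence would form a factor $\mu$ admitting two \emph{distinct} expressions of the form (\ref{newform}) for the same $n$ --- one with three blocks and $\gamma_0=\gamma_1=\emptyset$, one with two blocks and $\gamma_0,\gamma_1$ nonempty --- contradicting uniqueness; alignment for all $k\geq 2$ follows at once. (The easy forward direction of your bijection uses (\ref{defofmorse}) together with closure of the language under complementation, Lemma~\ref{anotherpath}.) With this patch your argument is complete, and it is arguably an improvement on the paper: it replaces a verification that quantifies over all $n$ by a single finite check at level zero, and it explains conceptually why precisely the single letters $0,1$, whose occurrences are not block-aligned, are the only paths with exactly three extensions.
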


\begin{proof}
We show that $|\CA \alpha \CA| \geq 3$ implies $|\CA \alpha \CA| = 4$. 
If $0 \alpha \notin \CL^*(E_{\Z})$, then $\CA \alpha \CA \subset \{1\alpha0, 1\alpha 1\}$ and $|\CA \alpha \CA| \leq 2$.
Similar arguments with the cases $1 \alpha \notin \CL^*(E_{\Z})$, $\alpha 0 \notin \CL^*(E_{\Z})$, or $\alpha 1 \notin \CL^*(E_{\Z})$ show that if $|\CA \alpha \CA| \geq 3$, then $\{ 0\alpha, 1\alpha, \alpha0, \alpha1 \} \subset \CL^*(E_{\Z})$. 

We know from Proposition ~\ref{prop-newform} that the path $\alpha$ can be written as
$$\alpha = \gamma_0 i_1^{(n)} \dots i_k^{(n)} \gamma_1 , \quad 2 \leq k \leq 4$$ 
for some $n \geq 0$.
If $0 < |\gamma_1| < 2^n$, Proposition ~\ref{prop-newform} also shows that $\gamma_1 j_1$ ($j_1 = 0$ or $1$) is an initial subpath of $i^{(n)}$ for some $i = 0$ or $1$, and thus $j_1$ is uniquely determined by $\gamma_1$.
This implies that $|\CA \alpha \CA| \leq 2$. 
Hence when $|\CA \alpha \CA| \geq 3$, $\gamma_1$ should be an empty word and similarly, so is $\gamma_0$. 
Thus if $|\CA \alpha \CA| \geq 3$, we can write  
$$\alpha = {i_1}^{(n)} \dots {i_k}^{(n)}, \quad 2 \leq k \leq 4,$$
for some $n$.
It is an easy but time-consuming work to show that there exist only four labeled paths $\alpha$ 
$$0^{(n)} 1^{(n)}, \, 1^{(n)} 0^{(n)}, \,  0^{(n)} 1^{(n)} 1^{(n)} 0^{(n)}, \,  1^{(n)} 0^{(n)} 0^{(n)} 1^{(n)}$$
satisfying $|\CA \alpha \CA| \geq 3$, and in each case one can show that $|\CA \alpha \CA| = 4$ by a straightforward computation. 
\end{proof}

\section{The trace on the labeled graph $C^*$-algebra $C^*(E_{\Z}, \CL, \bE_{\Z})$}\label{4} 

\noindent Note that if $\tau : C^*(E_{\Z}, \CL, \bE_{\Z}) \to \C$ is a tracial state, it satisfies
$$
\tau(s_{\alpha} p_{r(\beta \alpha)} s_{\alpha}^*) = \tau(p_{r(\beta \alpha)}),
$$
$$
\tau(p_{r(\alpha)}) = \tau(s_0 p_{r(\alpha 0)} s_0^*) + \tau( s_1 p_{r(\alpha 1)} s_1^*) = \tau(p_{r(\alpha 0)}) + \tau(p_{r(\alpha 1)})
$$
for $p_{r(\alpha)}, s_{\alpha} p_{r(\beta \alpha)} s_{\alpha}^* \in C^*(E_{\Z}, \CL, \bE_{\Z})^{\gamma}$. 
Let  
$$
\chi := \overline{{\rm span}} \{p_A : A \in \bE_{\Z} \}
$$
be the commutative $C^*$-subalgebra of $C^*(E_{\Z}, \CL, \bE_{\Z})^{\gamma}$ generated by projections $p_A$, $A \in \bE_{\Z}$. 
The following lemma shows that every faithful state on $C^*(E_{\Z}, \CL, \bE_{\Z})^{\gamma}$ (or on $\chi$) with the above  property extends to a tracial state on $C^*(E_{\Z}, \CL, \bE_{\Z})$.
By $\mathcal{S}(\mathcal{D})$ we denote the set of all faithful tracial states on a $C^*$-algebra $\mathcal{D}$. 

\begin{lem}\label{S_1, S_2, S_3}
Let $\mathcal{S} := \mathcal{S}(C^*(E_{\Z}, \CL, \bE_{\Z}))$ be the set of tracial states of $C^*(E_{\Z}, \CL, \bE_{\Z})$ and let
\begin{align*} 
\mathcal{S}_{\gamma} & := \{ \tau \in \mathcal{S}(C^*(E_{\Z}, \CL, \bE_{\Z})^{\gamma}) : \tau(s_\alpha p_{r(\beta \alpha)} s_\alpha^*) = \tau(p_{r(\beta \alpha)}) \},\\
\mathcal{S}_{\chi} & := \{ \tau \in \mathcal{S}(\chi) : \tau(p_{r(\alpha)}) = \tau(p_{r(\alpha 0)}) + \tau(p_{r(\alpha 1)})\}.
\end{align*} 
Then the restriction maps $\phi_1 : \mathcal{S} \to \mathcal{S}_{\gamma}$ and $\phi_2 : \mathcal{S}_{\gamma} \to \mathcal{S}_{\chi}$ are bijections with the inverses $\phi_1^{-1}(\tau_1) = \tau_1 \circ \Psi$ for $\tau_1 \in \mathcal{S}_{\gamma}$ and $\phi_2^{-1}(\tau_2) (s_\alpha p_{r(\beta \alpha)} s_\alpha^*) = \tau_2 (p_{r(\beta \alpha)})$ for $\tau_2 \in \mathcal{S}_{\chi}$, respectively.
\end{lem}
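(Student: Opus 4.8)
The plan is to establish the two bijections separately; in each case I would verify that the restriction map sends its domain into the stated codomain, that the proposed inverse formula produces a well-defined faithful tracial state, and that the two assignments compose to the identity in both orders. The forward (restriction) directions are routine in both cases: if $\tau\in\mathcal S$, then its restriction to the core is a faithful tracial state, and the trace property together with $s_\alpha^*s_\alpha=p_{r(\alpha)}$ and $r(\beta\alpha)\subseteq r(\alpha)$ gives $\tau(s_\alpha p_{r(\beta\alpha)}s_\alpha^*)=\tau(p_{r(\beta\alpha)}s_\alpha^*s_\alpha)=\tau(p_{r(\beta\alpha)})$, so $\phi_1(\tau)\in\mathcal S_\gamma$; similarly, for $\tau_1\in\mathcal S_\gamma$ relation (iv) gives $p_{r(\alpha)}=s_0p_{r(\alpha0)}s_0^*+s_1p_{r(\alpha1)}s_1^*$ (using $r(r(\alpha),a)=r(\alpha a)$), and applying $\tau_1$ with its defining identity yields the $\mathcal S_\chi$ relation, so $\phi_2(\tau_1)\in\mathcal S_\chi$.

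I would handle $\phi_2$ first, since it is the easier half. Given $\tau_2\in\mathcal S_\chi$, define $\widehat\tau_2$ on the core by $\widehat\tau_2(s_\alpha p_{r(\beta\alpha)}s_\alpha^*)=\tau_2(p_{r(\beta\alpha)})$ on the minimal projections of each $F_k$ and extend. The only thing to check is consistency with the inclusions $\iota_k$ of (\ref{iota_inclusion}), i.e.\ that $\sum_{a,b}\tau_2(p_{r(b\beta\alpha a)})=\tau_2(p_{r(\beta\alpha)})$. The sum over $a$ is exactly the $\mathcal S_\chi$ relation applied to $\beta\alpha$, while the sum over $b$ is automatic: because the graph is a single bi-infinite path with no sources, $r(\nu)=r(0\nu)\sqcup r(1\nu)$, which forces the genuine projection identity $p_{r(\nu)}=p_{r(0\nu)}+p_{r(1\nu)}$ in $\chi$ via relation (i). Since each $F_k\cong\C^{d_k}$ is commutative (Remark \ref{remark_AF}), $\widehat\tau_2$ is automatically tracial, and it is a faithful state because $\tau_2(p_{r(\beta\alpha)})>0$ for every range set (nonempty, indeed infinite, by Lemma \ref{anotherpath}), giving full support on the AF limit. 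Finally $\phi_2(\widehat\tau_2)=\tau_2$ (take $\alpha$ trivial, so $p_{r(\mu)}=s_\alpha p_{r(\mu)}s_\alpha^*$), and $\widehat{\phi_2(\tau_1)}=\tau_1$ by the $\mathcal S_\gamma$ identity.

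For $\phi_1$, the functional $\tau_1\circ\Psi$ is a faithful state because $\Psi$ is a faithful conditional expectation (Remark \ref{review remarks}) and $\tau_1$ is faithful. The main obstacle is to show $\tau_1\circ\Psi$ is \emph{tracial}. I would reduce, by linearity, continuity, and Remark \ref{review remarks}(i), to checking $\tau_1(\Psi(xy))=\tau_1(\Psi(yx))$ for monomials $x=s_\alpha p_{r(\mu)}s_\beta^*$ and $y=s_\gamma p_{r(\nu)}s_\delta^*$, with $|\mu|,|\nu|$ taken large. Since $\Psi$ annihilates every monomial of nonzero gauge degree, only the degree-zero summands of $xy$ and $yx$ survive; computing these via the multiplication rules (ii)--(iii) (matching $s_\beta^*s_\gamma$ and $s_\delta^*s_\alpha$ prefix-wise) expresses $\Psi(xy)$ and $\Psi(yx)$ as finite sums of core monomials $s_\zeta p_{r(\rho)}s_\zeta^*$. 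Applying $\tau_1$ and using $\tau_1(s_\zeta p_{r(\rho)}s_\zeta^*)=\tau_1(p_{r(\rho)})$ collapses both sides to values of $\tau_1$ on $\chi$, and commutativity of the core (Remark \ref{remark_AF}) guarantees the two bookkeeping sums agree. The prefix bookkeeping is the tedious part, but conceptually the $\mathcal S_\gamma$ condition is precisely the assertion that $\tau_1$ is tracial under conjugation by the $s_\zeta$, which is all the degree-zero part needs.

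It remains to prove $\tau=\tau\circ\Psi$ for $\tau\in\mathcal S$ (needed for mutual inverseness of $\phi_1$), i.e.\ that $\tau$ vanishes on monomials of nonzero gauge degree; here I would exploit overlap-freeness (Remark \ref{overlap}). For $x=s_\alpha p_{r(\mu)}s_\beta^*$ with $|\alpha|>|\beta|$, the trace property and $s_\beta^*s_\alpha=p_{r(\beta)}s_{\alpha'}$ (when $\beta$ is a prefix of $\alpha$, with $\alpha=\beta\alpha'$; otherwise $\tau(x)=0$ as $s_\beta^*s_\alpha=0$) reduce $\tau(x)$ to $\tau(s_{\alpha'}p_C)$ with $|\alpha'|\geq1$. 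Using $p_Cs_{\alpha'}=s_{\alpha'}p_{r(C,\alpha')}$, the trace property, and $r(r(C,\eta),\zeta)=r(C,\eta\zeta)$ gives $\tau(s_{\alpha'}p_C)=\tau(s_{\alpha'}p_{r(C,(\alpha')^{k})})$ for every $k$; but $(\alpha')^{3}\notin\CL^*(E_{\Z})$ by overlap-freeness, so $r(C,(\alpha')^{3})\subseteq r((\alpha')^{3})=\emptyset$ and hence $\tau(x)=0$. The case $|\alpha|<|\beta|$ follows by taking adjoints. Together with the previous paragraph this yields both bijections with the stated inverses.
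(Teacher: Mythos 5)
Your proposal is correct and follows essentially the same route as the paper: the same overlap-freeness argument ($(\alpha')^{3}\notin\CL^*(E_{\Z})$, hence $\tau(s_{\alpha'}p_{r(C,(\alpha')^{3})})=0$) to show every tracial state vanishes off the core and so $\tau=\tau\circ\Psi$, the same gauge-degree reduction to prove $\tau_1\circ\Psi$ is tracial, and the same inductive-limit construction of $\phi_2^{-1}$ through the commutative finite-dimensional algebras $F_k$. The only differences are cosmetic: you spell out the faithfulness of the extended state and the orthogonal decomposition $p_{r(\nu)}=p_{r(0\nu)}+p_{r(1\nu)}$, both of which the paper leaves implicit.
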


\begin{proof}
To prove that $\phi_1$ is injective, it is enough to show that 
$$ \tau = \tau \circ \Psi \mbox{ for all } \tau \in \mathcal{S}.$$
Let $s_{\alpha} p_A s_{\beta}^* \in C^*(E_{\Z}, \CL, \bE_{\Z})$ with $|\alpha| \neq |\beta|$. 
Without loss of generality, we may assume that $|\alpha| > |\beta|$. Since $\tau$ is a trace, we have
$$\tau (s_{\alpha} p_A s_{\beta}^*) = \tau (s_{\beta}^* s_{\alpha} p_A) 
= \left\{ \begin{array}{ll} \tau (p_{r(\beta)} s_{\alpha_1} p_A) = \tau (s_{\alpha_1} p_A) & \hbox{ if } \alpha = \beta \alpha_1, \\
                                                                   0 & \hbox{ otherwise,}
                                         \end{array}
                                \right.$$
and then
$$
\tau (s_{\alpha_1} p_A) = \tau (p_A s_{\alpha_1}) = \tau (s_{\alpha_1} p_{r(A, \alpha_1)}) = \cdots = \tau (s_{\alpha_1} p_{r(A, \alpha_1^3)}) = 0,
$$ 
because $\alpha_1^3 \notin \CL^*(E_{\Z})$. Thus $\tau \equiv 0$ on $C^*(E_{\Z}, \CL, \bE_{\Z}) \setminus C^*(E_{\Z}, \CL, \bE_{\Z})^{\gamma}$ and hence we have $ \tau = \tau \circ \Psi$.

If we prove that $\tau \circ \Psi \in \mathcal{S}$ for all $\tau \in \mathcal{S}_{\gamma}$, then $\phi_1$ is surjective.
By the continuity of $\tau$ and $\Psi$, it is enough to show that 
\begin{equation}\label{XY}
\tau(\Psi(XY)) = \tau(\Psi(YX))
\end{equation}
for $X, Y \in {\rm span}\{s_{\alpha} p_A s_{\beta}^* : \alpha, \beta \in \CL^*(E), A \subset r(\alpha) \cap r(\beta)\}$.
Since $\tau$ and $\Psi$ is linear, we are reduced to proving (\ref{XY}) for $X = s_{\alpha} p_A s_{\beta}^*$, $Y = s_{\mu} p_B s_{\nu}^*$.
Moreover, the equality $s_{\alpha} p_A s_{\beta}^* = \sum_{\delta \in \CL(A E^n)} s_{\alpha \delta} p_{r(A, \delta)} s_{\beta \delta}^*$ yields that we only need to prove (\ref{XY}) when $|\beta| = |\mu|$.  
Note from 
$$
\gamma_z (XY) = z^{|\alpha|-|\beta|+|\mu|-|\nu|} XY \mbox{ and } \gamma_z (YX) = z^{-|\alpha|+|\beta|-|\mu|+|\nu|} YX
$$ that  
$$
|\alpha|-|\beta|+|\mu|-|\nu| = |\alpha|-|\nu| \neq 0 \Longrightarrow \Psi(XY) = \Psi(YX) = 0.
$$
Hence we may assume that $|\alpha| = |\nu|$ and $|\beta| = |\delta|$.
Then we obtain that
\begin{align*}
\tau(\Psi(XY)) & = \tau (XY) = \tau (s_{\alpha} p_A s_{\beta}^* s_{\mu} p_B s_{\nu}^* ) \\
&= \tau (\delta_{\beta,\mu} s_{\alpha} p_{A \cap B} s_{\nu}^* ) = \tau (\delta_{\beta,\mu} \delta_{\alpha, \nu} s_{\alpha} p_{A \cap B} s_{\alpha}^*)\\
& = \tau (\delta_{\beta,\mu} \delta_{\alpha, \nu} p_{A \cap B}),\\
\tau(\Psi(YX)) & = \tau (\delta_{\beta,\mu} \delta_{\alpha, \nu} p_{B \cap A})\\
&= \tau(\Psi(XY)),
\end{align*}
and $\phi_1$ is bijective.

Next, we prove that $\phi_2$ is bijective. 
Let $\tau \in \mathcal{S}_{\chi}$.
Recall from Remark ~\ref{remark_AF} (i) that $C^*(E_{\Z}, \CL, \bE_{\Z})^{\gamma} = \displaystyle \lim_{\longrightarrow} (F_k, \iota_k)$ where
$$
F_k = \overline{\rm{span}}\{ s_{\alpha} p_{r(\beta \alpha)} s_{\alpha}^* : \alpha, \beta \in \CL^*(E_{\Z}) \mbox{ with } |\alpha|=|\beta| = k \}  \cong \mathbb{C}^{d_k}
$$
with $d_k = |\CL^*(E_{\Z}^{2k})|$. 
Define $\overline{\tau}_k : F_k \to \C$ by $\overline{\tau}_k (s_\alpha p_{r(\beta \alpha)} s_\alpha^*) := \tau (p_{r(\beta \alpha)})$.
Then $\overline{\tau}_k$ is a state on $F_k$ for each $k \geq 1$.
Also for the inclusion $\iota_k : F_k \to F_{k+1}$, we have $\overline{\tau}_k (x) = \overline{\tau}_{k+1} (\iota_k (x))$ for $x \in F_k$ and $k \geq 1$.
In fact, 
\begin{align*} 
\overline{\tau}_{k+1} (\iota_k (s_{\alpha} p_{r(\beta \alpha)} s_{\alpha}^*)) 
& =\overline{\tau}_{k+1} (\sum_{a \in \{0,1\}} s_{\alpha a} p_{r(\beta \alpha a)} s_{\alpha a}^*) \\
& =\sum_{a \in \{0,1\}} \tau (p_{r(\beta \alpha a)}) = \tau (p_{r(\beta \alpha)}) \\
& = \overline{\tau}_k (s_{\alpha} p_{r(\beta \alpha)} s_{\alpha}^*).
\end{align*}           
Thus $\overline{\tau} := \displaystyle \lim_{\longrightarrow} \overline{\tau}_k$ extends to a state on $C^*(E_{\Z}, \CL, \bE_{\Z})^{\gamma}$.
Moreover $\overline{\tau} (s_{\alpha} p_{r(\beta \alpha)} s_{\alpha}^*) = \tau (p_{r(\beta \alpha)}) = \overline{\tau}  (p_{r(\beta \alpha)})$, and hence $\overline{\tau} \in \mathcal{S}_{\gamma}$. 
It is now obvious that $\phi_2$ is bijective.
\end{proof}

\begin{remark}
Let $\tau \in \mathcal{S}_{\chi}$.
The equality $r(\alpha) = \bigsqcup_{\mu \alpha \in \CA^m \alpha} r(\mu \alpha)$ implies that 
$$
\tau(p_{r(\alpha)}) = \sum_{\mu \alpha \in \CA^m \alpha} \tau(p_{r(\mu \alpha)}).
$$
In addition, $p_{r(\alpha)} = \sum_{\alpha \mu \in \alpha \CA^n} s_{\mu} p_{r(\alpha \mu)} s_{\mu}^*$ implies that 
$$\tau(p_{r(\alpha)}) = \sum_{\alpha \mu \in \alpha \CA^n} \tau(p_{r(\alpha \mu)}).$$ 
Combining these two, we obtain 
$$
\tau(p_{r(\alpha)}) = \sum_{\mu \in \CA^m \alpha \CA^n} \tau(p_{r(\mu)})
$$
for any $m,n \geq 0$.
For example, let $\alpha = 0101$. 
Since $\CA^2 \alpha = \{100101\}$ and $\CA^2 \alpha \CA^2 = \{10010110\}$, we have
$$
\tau(p_{r(0101)}) = \tau(p_{r(100101)}) = \tau(p_{r(10010110)}).
$$
\end{remark}

By (\ref{accommodating set form}), any linear functional on $\chi = \overline{{\rm span}} \{P_A : A \in \bE_{\Z} \}$ is determined by its values on the projections $p_{r(\alpha)}$, $\alpha \in \CL^*(E_{\Z})$.
Note that $\chi$ is a commutative $C^*$-algebra which is the inductive limit of the finite dimensional $C^*$-subalgebras $\chi_k := {\rm span} \{P_{r(\alpha)} : |\alpha| = k \} \cong \C^{|\CL(E^k)|}$.
Thus any continuous linear functional $\psi : \cup_{k \geq 1} \chi_k \to \C$ naturally extends to a linear functional on $\chi$.  
Let $\phi$ be a linear functional on $\cup_{k \geq 3} \chi_k$ given by 
\begin{enumerate}
\item[(i)]  $\phi(p_{r(\alpha)}) = 1 / 6$ for $\alpha$ with $|\alpha| = 3$,
\item[(ii)] for $\alpha$ with $|\alpha| \geq 3$,
$$\phi (p_{ r(i \alpha)} ) = \left\{ \begin{array}{ll} \phi (p_{r(\alpha)}), & \hbox{ if } \quad \overline{i} \alpha \notin \CL^*(E_{\Z}), \\

                                                                    \phi (p_{r(\alpha)}) / 2, & \hbox{ if } \quad \overline{i} \alpha \in \CL^*(E_{\Z}).
                                         \end{array}
                                \right.$$
\end{enumerate}
Then $\phi$ is positive and of norm $1$, hence it extends to a state on $\chi$. 
Note that (ii) holds for all paths $\alpha \in \CL^*(E_{\Z})$ except $\alpha = 0, 1$. 
The paths $0, 1$ are the only paths $\alpha$ such that $|\CA \alpha \CA| = 3$. 

\begin{thm}\label{main}
The state $\phi$ on $\chi$ extends to a trace $\tilde{\phi}$ on $C^*(E_{\Z}, \CL, \bE_{\Z})$ by
$$
\tilde{\phi} (s_{\alpha} p_{A} s_{\beta}^*) = \delta_{\alpha, \beta} \, \phi(p_{A})
$$ 
for $A \subset r(\alpha) \cap r(\beta)$. 
In particular, 
\begin{equation*}
\tilde{\phi}(p_{r(\alpha)}) = \begin{cases} \frac{1}{3},  \quad \mbox{ if } \alpha = 00, 11,\\
\frac{1}{6},  \quad \mbox{ if } \alpha = 01, 10,\\
\frac{1}{6},  \quad \mbox{ if } |\alpha| = 3,
\end{cases}
\end{equation*}
and $\tilde{\phi}(p_{r(\alpha)}) = 2 \tilde{\phi}(p_{r(0 \alpha)}) = 2 \tilde{\phi}(p_{r(1 \alpha)})$ for $\alpha \in \CL(E^{\geq 3})$ with $0\alpha, 1\alpha \in \CL(E^{\geq 4})$.
\end{thm}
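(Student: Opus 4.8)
The plan is to prove the statement in two movements: first, to show that $\phi$ belongs to $\mathcal{S}_{\chi}$, and second, to transport it to a trace through the bijections of Lemma~\ref{S_1, S_2, S_3}. Membership in $\mathcal{S}_{\chi}$ requires of the state $\phi$ on $\chi$ both that it be a well-defined faithful state and that it satisfy the \emph{append} relation $\phi(p_{r(\alpha)}) = \phi(p_{r(\alpha 0)}) + \phi(p_{r(\alpha 1)})$. Once $\phi\in\mathcal{S}_{\chi}$ is in place, I would set $\tilde{\phi} := \phi_1^{-1}(\phi_2^{-1}(\phi)) = \phi_2^{-1}(\phi)\circ\Psi$. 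Since $\Psi(s_\alpha p_A s_\beta^*) = 0$ unless $|\alpha| = |\beta|$, and for $|\alpha| = |\beta|$ one has $r(\alpha)\cap r(\beta) = \emptyset$ unless $\alpha = \beta$, the explicit inverses recorded in Lemma~\ref{S_1, S_2, S_3} give at once $\tilde{\phi}(s_\alpha p_A s_\beta^*) = \delta_{\alpha,\beta}\,\phi(p_A)$, which is the asserted formula.

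For the first movement I would check that $\phi$ is a well-defined faithful state by working level by level. Each $\chi_k = \mathrm{span}\{p_{r(\alpha)} : |\alpha| = k\}$ carries the $p_{r(\alpha)}$ as mutually orthogonal minimal projections summing to the unit (every vertex is the range of a unique labeled path of length $k$), so $\phi$ is a state on $\chi_k$ exactly when $\phi(p_{r(\alpha)})\geq 0$ and $\sum_{|\alpha|=k}\phi(p_{r(\alpha)}) = 1$. Strict positivity, hence faithfulness, is immediate from the recursion (ii), as every value arises from $1/6$ by multiplying by $1$ or $1/2$. The compatibility of $\phi$ with the inclusion $\chi_k\hookrightarrow\chi_{k+1}$, which reads $p_{r(\alpha)} = p_{r(0\alpha)} + p_{r(1\alpha)}$ (because $r(\mu)\subset r(\alpha)$ forces $\mu = \beta\alpha$), is precisely the \emph{prepend} relation $\phi(p_{r(\alpha)}) = \phi(p_{r(0\alpha)}) + \phi(p_{r(1\alpha)})$; and this is exactly what (ii) encodes, since when $\overline{i}\alpha\notin\CL^*(E_{\Z})$ only one of $0\alpha, 1\alpha$ survives and the value is unchanged, while when $\overline{i}\alpha\in\CL^*(E_{\Z})$ both survive and each receives half. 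Thus $\phi$ passes to a faithful state on $\chi = \lim_{\longrightarrow}\chi_k$, and normalization propagates from the base count $|\CL(E^3)| = 6$, giving $6\cdot\tfrac16 = 1$.

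The heart of the matter, and the step I expect to be the main obstacle, is the \emph{append} relation, because $\phi$ is defined purely by left-recursion and there is no a priori reason that stripping symbols from the right interacts well with it. I would argue by induction on $|\alpha|$, writing $\alpha = i\alpha'$ and comparing the three left-recursion factors attached to $p_{r(i\alpha')}$, $p_{r(i\alpha' 0)}$, $p_{r(i\alpha' 1)}$, which are governed by membership of $\overline{i}\alpha'$, $\overline{i}\alpha' 0$, $\overline{i}\alpha' 1$ in $\CL^*(E_{\Z})$. When $\overline{i}\alpha'\notin\CL^*(E_{\Z})$, or when $\overline{i}\alpha'$ admits both right-extensions, the three factors coincide and append-additivity passes from $\alpha'$ to $\alpha$. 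The genuinely delicate case is $\overline{i}\alpha'\in\CL^*(E_{\Z})$ with a \emph{unique} right-extension: here the factors disagree, and one must show that the Thue--Morse combinatorics force a compensating vanishing (for instance $\alpha' 1\notin\CL^*(E_{\Z})$). This is where the rigidity of the word enters, and I would close it using the normal form of Proposition~\ref{prop-newform} together with the extension count of Lemma~\ref{extension-number}. An equivalent and arguably cleaner route I would also pursue is to prove the reversal symmetry $\phi(p_{r(\alpha)}) = \phi(p_{r(\alpha^{-1})})$ straight from Proposition~\ref{prop-newform} (the $i^{(n)}$-block decomposition is preserved under $\alpha\mapsto\alpha^{-1}$, since $(i^{(n)})^{-1}$ is again an $i^{(n)}$-block), and then to deduce append from prepend via $(\alpha a)^{-1} = a\,\alpha^{-1}$.

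Finally, with $\phi\in\mathcal{S}_{\chi}$ and $\tilde{\phi}$ constructed, the explicit values follow from the base case $\tilde{\phi}(p_{r(\alpha)}) = \tfrac16$ for $|\alpha| = 3$ together with the prepend relation. For $|\alpha| = 2$ one expands $\tilde{\phi}(p_{r(\alpha)}) = \tilde{\phi}(p_{r(0\alpha)}) + \tilde{\phi}(p_{r(1\alpha)})$ as a sum over the length-three refinements $i\alpha\in\CL^*(E_{\Z})$; using $000, 111\notin\CL^*(E_{\Z})$ (they are overlaps) to discard the vanishing terms and substituting the base value $\tfrac16$ then determines each length-two entry of the table, words with a forbidden refinement inheriting a single $\tfrac16$ and the remaining words splitting into two equal halves. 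The last displayed identity $\tilde{\phi}(p_{r(\alpha)}) = 2\tilde{\phi}(p_{r(0\alpha)}) = 2\tilde{\phi}(p_{r(1\alpha)})$ for $\alpha\in\CL(E^{\geq 3})$ with both one-letter left-extensions is simply a restatement of case (ii) in the definition of $\phi$.
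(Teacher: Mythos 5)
Your architecture is the paper's own: verify $\phi\in\mathcal{S}_{\chi}$ and then pull it back through the bijections of Lemma~\ref{S_1, S_2, S_3}, and your ``cleaner route'' --- prove the reversal symmetry $\phi(p_{r(\alpha)})=\phi(p_{r(\alpha^{-1})})$ and deduce the append relation from the built-in prepend relation via $(\alpha a)^{-1}=a\,\alpha^{-1}$ --- is exactly how the paper closes the argument. Your level-by-level verification that $\phi$ is a well-defined faithful state (minimal projections $p_{r(\alpha)}$ summing to the unit in each $\chi_k$, normalization $6\cdot\tfrac16=1$, compatibility with $\chi_k\hookrightarrow\chi_{k+1}$ being precisely the prepend relation) is a worthwhile addition that the paper leaves implicit.

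The genuine gap is that neither of your two routes actually proves the one statement on which everything hinges. The reversal symmetry does \emph{not} follow ``straight from Proposition~\ref{prop-newform}'': the fact that $\alpha\mapsto\alpha^{-1}$ carries $i^{(n)}$-block decompositions to $i^{(n)}$-block decompositions says nothing by itself about $\phi$, because $\phi$ is defined by the left recursion (ii), not as a function of the block data. Concretely, $\phi(p_{r(\alpha)})=\tfrac16\cdot(\tfrac12)^{m(\alpha)}$, where $m(\alpha)$ counts the proper suffixes of $\alpha$ of length at least $3$ admitting both one-letter left extensions in $\CL^*(E_{\Z})$; the symmetry is the nontrivial combinatorial identity that $\alpha$ and $\alpha^{-1}$ have equally many such branching suffixes, i.e.\ that branching suffixes and branching prefixes of $\alpha$ are equinumerous. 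Showing that the $\phi$-value is a (reversal-invariant) function of the block decomposition is precisely the work to be done, and the paper does it by induction on $|\alpha|$, using Lemma~\ref{extension-number} to reduce to the five possible shapes of $\CA\beta\CA$ and checking $\phi(p_{r(j_0\beta j_1)})=\phi(p_{r(j_1\beta^{-1}j_0)})$ case by case. Your route (a) stops at the same point: you correctly isolate the delicate configuration ($\overline{i}\alpha'\in\CL^*(E_{\Z})$ with a unique right extension while $i\alpha'$ has two) and correctly observe that Thue--Morse combinatorics must force a compensating vanishing, but you never prove that it does; that forced vanishing \emph{is} the theorem, not a detail one may defer.

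One further point you should not present as a confirmation: the length-two values your expansion produces, namely $\tfrac16$ for $\alpha=00,11$ and $\tfrac13$ for $\alpha=01,10$, contradict the table displayed in the statement. Your values are in fact the internally consistent ones --- they agree with $b_{0,1}=t=\tfrac16$ and $b_{0,2}=\tfrac12-t=\tfrac13$ in Proposition~\ref{unique trace} and with Remark~\ref{traceequation} at $n=0$ --- so the first two lines of the printed table are interchanged; this discrepancy should be flagged explicitly rather than silently absorbed.
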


\begin{proof}
We will prove that the state $\phi$ on $\chi$ belongs to $\mathcal{S}_{\chi}$. 
Then the assertion follows from Lemma ~\ref{S_1, S_2, S_3}.
For this we first show that for any labeled path $\alpha$, 
$$
\phi(p_{r(\alpha)}) = \phi (p_{r(\alpha^{-1})}).
$$
We use the induction on the length of $\alpha$. 
From the definition of $\phi$, it is obvious that $\phi(p_{r(\alpha)}) = \phi (p_{r(\alpha^{-1})})$ for $|\alpha| \leq 3$. 
We now assume that this holds for all labeled path $\alpha$ with length $|\alpha| \leq N$ for some $N \geq 3$. 
Let $j_0 \beta j_1 \in \CL^*(E_{\Z})$ for some $j_0, j_1 = 0$ or $1$, and $|\beta| = N-1$.
By Lemma ~\ref{extension-number}, we know that $\CA \beta \CA$ is one of the following: 
\begin{enumerate}
\item[(i)] $\{j_0 \beta j_1\}$,
\item[(ii)] $\{j_0 \beta j_1, \, \overline{j_0} \beta j_1\}$, 
\item[(iii)] $\{j_0 \beta j_1, \, j_0 \beta \overline{j_1}\}$,
\item[(iv)] $\{j_0 \beta j_1, \, \overline{j_0} \beta \overline{j_1}\}$, and
\item[(v)] $\{j_0 \beta j_1,\, \overline{j_0} \beta j_1,\, j_0 \beta \overline{j_1},\, \overline{j_0} \beta \overline{j_1})\}$.
\end{enumerate}
Also for each case, we can show that $\phi (p_{r(j_0 \beta j_1)}) = \phi (p_{r(j_1 \beta^{-1} j_0)})$.
Actually, if $\CA \beta \CA = \{j_0 \beta j_1\}$ as in (i), first note that $\overline{j_0} \beta , \,  \beta \overline{j_1} \notin \CL^*(E_{\Z})$ and so $\beta^{-1} \overline{j_0} , \,\, \overline{j_1} \beta^{-1} \notin \CL^*(E_{\Z})$.
Thus 
\begin{align*}
\phi (p_{r(j_0 \beta j_1)}) & = \phi (p_{r(\beta j_1)}) = \phi (p_{ r(j_1 \beta^{-1})})  \\
                                     & = \phi (p_{r(\beta^{-1})}) = \phi (p_{r(\beta)}) \\
                                     & = \phi (p_{r(j_0 \beta)}) = \phi (p_{r(\beta^{-1} j_0)}) \\
                                     & = \phi (p_{r(j_1 \beta^{-1} j_0)}).
\end{align*}
In case (ii), we have
\begin{align*}
\phi (p_{r(j_0 \beta j_1)}) & = \phi (p_{r(\beta j_1)}) / 2 = \phi (p_{r(j_1 \beta^{-1})}) /2 \\
                                     & = \phi (p_{r(\beta^{-1})}) /2 = \phi (p_{r(\beta)}) /2\\
                                     & = \phi (p_{r(j_0 \beta)}) = \phi (p_{r(\beta^{-1} j_0)}) \\
                                     & = \phi (p_{r(j_1 \beta^{-1} j_0)}).
\end{align*}
The rest cases can be shown similarly. Then from the above observation, we have
$$
\phi (p_{r(\alpha)}) = \phi (p_{r(\alpha^{-1})}) = \phi (p_{r(0 \alpha^{-1})})  + \phi (p_{r(1 \alpha^{-1})}) = \phi (p_{r(\alpha 0)})  + \phi (p_{r(\alpha 1)}),
$$
which shows that $\phi \in \mathcal{S}_{\chi}$. 
This completes the proof.
\end{proof}

\begin{prop}\label{unique trace}
The trace $\tilde{\phi}$ on the $C^*$-algebra $C^*(E_{\Z}, \CL, \bE_{\Z})$ is a unique trace.
\end{prop}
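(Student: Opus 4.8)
The plan is to deduce uniqueness of the trace from the unique ergodicity of the Thue--Morse subshift. By Lemma~\ref{S_1, S_2, S_3} the restriction maps $\phi_1,\phi_2$ identify $\mathcal{S}$ with $\mathcal{S}_\chi$, so it suffices to prove that $\mathcal{S}_\chi$ is a singleton; since Theorem~\ref{main} already exhibits $\phi\in\mathcal{S}_\chi$, I only need that an arbitrary $\tau\in\mathcal{S}_\chi$ equals $\phi$. A state on the commutative algebra $\chi$ is, by the Riesz representation theorem, given by a regular Borel probability measure $\mu_\tau$ on the spectrum $X$ of $\chi$, with $\tau(p_{r(\alpha)})=\mu_\tau(U_\alpha)$ for the clopen set $U_\alpha$ corresponding to $p_{r(\alpha)}$. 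Using $\chi=\varinjlim\chi_k$ with the connecting identities $p_{r(\alpha)}=p_{r(0\alpha)}+p_{r(1\alpha)}$, I would first identify $X$ with the space $X^-$ of left-infinite sequences $\cdots y_{-1}y_0$ all of whose finite suffixes lie in $\CL^*(E_{\Z})$, where $U_\alpha$ is the cylinder fixing the last $|\alpha|$ symbols.

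Next I would show that the condition $(\mathrm{F})$, namely $\tau(p_{r(\alpha)})=\tau(p_{r(\alpha 0)})+\tau(p_{r(\alpha 1)})$, which defines $\mathcal{S}_\chi$, is exactly invariance of $\mu_\tau$ under the one-sided shift $S$ on $X^-$ given by $(Sy)_n=y_{n-1}$. Indeed $S^{-1}U_\alpha=U_{\alpha 0}\sqcup U_{\alpha 1}$, so $(\mathrm{F})$ reads $\mu_\tau(S^{-1}U_\alpha)=\mu_\tau(U_\alpha)$; as the $U_\alpha$ generate the topology, hence the Borel $\sigma$-algebra, of the Cantor set $X^-$, and two regular Borel measures agreeing on them must coincide, this forces $S_*\mu_\tau=\mu_\tau$. (The complementary identity $(\mathrm{B})$, $p_{r(\alpha)}=p_{r(0\alpha)}+p_{r(1\alpha)}$, is automatic in $\chi$ and only expresses that $\mu_\tau$ is a measure.) Thus $\mathcal{S}_\chi$ is in bijection with the set of $S$-invariant Borel probability measures on $X^-$, faithfulness of $\tau$ corresponding to full support of $\mu_\tau$.

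Finally I would invoke unique ergodicity. The two-sided Thue--Morse subshift is the system generated by the primitive substitution $0\mapsto 01$, $1\mapsto 10$, hence is minimal and uniquely ergodic, and $(X^-,S)$ is a topological factor of it (via the map sending a two-sided sequence to its left-infinite tail); since unique ergodicity is inherited by factors, $(X^-,S)$ carries a unique invariant probability measure. Therefore $\mathcal{S}_\chi=\{\phi\}$, and by the first paragraph $\tilde\phi$ is the unique trace on $C^*(E_{\Z},\CL,\bE_{\Z})$. The step I expect to be the crux is not the reduction but the genuinely analytic content buried in unique ergodicity: at any finite length the relations $(\mathrm{F})$ and $(\mathrm{B})$ together with positivity leave real freedom in the values $\tau(p_{r(\alpha)})$, and only imposing $(\mathrm{F})$ simultaneously at all lengths pins them down. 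If one prefers a self-contained argument avoiding the citation, the same conclusion can be obtained by re-deriving unique ergodicity through the $i^{(n)}$-block renormalization of Proposition~\ref{prop-newform}: recognizability of the Thue--Morse substitution lets one rewrite $\tau(p_{r(\alpha)})$ for long $\alpha$ in terms of its unique de-substituted word, and primitivity makes this renormalization contract to the single fixed point $\phi$.
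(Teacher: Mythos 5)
Your argument is correct, but it takes a genuinely different route from the paper's. Both proofs begin the same way, reducing via Lemma~\ref{S_1, S_2, S_3} to showing that $\mathcal{S}_\chi$ is a singleton; from there they diverge. The paper stays inside $\chi$ and proves the needed rigidity by hand: setting $b_{n,k}:=\tau(p_{r(i^{(n)}j^{(n)})})$, it uses Lemma~\ref{followerset} to get $b_{n,1}=b_{n,4}$, $b_{n,2}=b_{n,3}$ and the renormalization recursion $\bigl(\begin{smallmatrix} b_{n,1}\\ b_{n,2}\end{smallmatrix}\bigr)=\bigl(\begin{smallmatrix} 0&1\\ 2&1\end{smallmatrix}\bigr)\bigl(\begin{smallmatrix} b_{n+1,1}\\ b_{n+1,2}\end{smallmatrix}\bigr)$, diagonalizes it in (\ref{tracevalue}), and lets positivity force $b_{0,1}=1/6$, i.e.\ $\tau=\phi$. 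You instead pass to the Gelfand spectrum of $\chi$, identify it with the left-infinite subshift $(X^-,S)$, show that the defining condition of $\mathcal{S}_\chi$ is exactly $S$-invariance of the representing measure (your $\pi$-system argument with $S^{-1}U_\alpha=U_{\alpha 0}\sqcup U_{\alpha 1}$ is right), and then quote unique ergodicity of the Thue--Morse system. What you gain is brevity and the conceptual dictionary ``traces $=$ invariant measures''; what you give up is self-containedness --- unique ergodicity of Thue--Morse is precisely the external input on which the known proof in \cite{JKKP} rests, and which this paper deliberately set out to replace by a direct computation --- and also the explicit values $\tilde{\phi}(p_{r(i^{(n)}j^{(n)})})\in\{1/(6\cdot 2^n),\,1/(3\cdot 2^n)\}$, which fall out of the paper's argument and are reused in Section~\ref{5} to evaluate $K_0(\tilde{\phi})$. (Your closing remark about re-deriving unique ergodicity through $i^{(n)}$-block renormalization is, in effect, a description of the paper's actual proof.) Two details to tighten if you write this up: your projection $\pi(x)=\cdots x_{-1}x_0$ satisfies $S\circ\pi=\pi\circ\sigma^{-1}$, not $S\circ\pi=\pi\circ\sigma$, so the factor map should be taken from $(X,\sigma^{-1})$ (harmless, since $\sigma$ and $\sigma^{-1}$ have the same invariant measures); and the inheritance of unique ergodicity by topological factors deserves its one-line proof (lift an invariant measure through $\pi$ by Hahn--Banach and Ces\`aro averaging).
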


\begin{proof}
By Lemma ~\ref{S_1, S_2, S_3} and Theorem ~\ref{main}, it is enough to show that $|\mathcal{S}_{\chi}| \leq 1$. 
Let $\tau \in \mathcal{S}_{\chi}$ and set
\begin{align*}
b_{n,1} := \tau(p_{r(0^{(n)} 0^{(n)})}), & \quad b_{n,2} := \tau(p_{r(0^{(n)} 1^{(n)})}),\\
b_{n,3} := \tau(p_{r(1^{(n)} 0^{(n)})}), & \quad b_{n,4} := \tau(p_{r(1^{(n)} 1^{(n)})})
\end{align*}
for $n \geq 0$.
(If two states on $\chi$ have the common values at $p_{r(i^{(n)} j^{(n)})}$ for all $i, j = 0, 1$ and $n \geq 0$, then they are the same state.)
We first show $b_{n,1} = b_{n,4} < b_{n,2} = b_{n,3}$.
This particularly gives $b_{0,1} + b_{0,2} = \frac{1}{2}$.
By Lemma ~\ref{followerset}, we have 
\begin{align}\label{bn}
b_{n,2} = \tau (p_{r(0^{(n)} 1^{(n)})}) & = \tau (p_{r(0^{(n)} 1^{(n)} 0^{(n)})}) + \tau (p_{r(0^{(n)} 1^{(n)} 1^{(n)})}) \nonumber \\
                                     & = \tau (p_{r(0^{(n)} 1^{(n)} 0^{(n)})}) + \tau (p_{r(1^{(n)} 1^{(n)})}) \nonumber\\
                                     & = \tau (p_{r(0^{(n)} 1^{(n)} 0^{(n)})}) + b_{n,4},\\
b_{n,3} = \tau (p_{r(1^{(n)} 0^{(n)})}) & = \tau (p_{r(0^{(n)} 1^{(n)} 0^{(n)})}) + \tau (p_{r(1^{(n)} 1^{(n)} 0^{(n)})}) \nonumber\\
                                     & = \tau (p_{r(0^{(n)} 1^{(n)} 0^{(n)})}) + \tau (p_{r(1^{(n)} 1^{(n)})}) \nonumber\\
                                     & = \tau (p_{r(0^{(n)} 1^{(n)} 0^{(n)})}) + b_{n,4},\nonumber
\end{align}
and thus we have $b_{n,2} - b_{n,4} = b_{n,3} - b_{n,4} = \tau (p_{r(0^{(n)} 1^{(n)} 0^{(n)})}) > 0$, which shows $b_{n,2}=b_{n,3} > b_{n,4}$. 
Similarly $b_{n,2} > b_{n,1}$ can be shown, and from 
\begin{align*}
b_{n,1} = \tau (p_{r(0^{(n)} 0^{(n)})}) & = \tau (p_{r(1^{(n)} 0^{(n)} 0^{(n)} 1^{(n)})}) =  \tau (p_{r(1^{(n+1)} 0^{(n+1)})}) = b_{n+1, 3} \, , \\
b_{n,4} = \tau (p_{r(1^{(n)} 1^{(n)})}) & = \tau (p_{r(0^{(n)} 1^{(n)} 1^{(n)} 0^{(n)})}) =  \tau (p_{r(0^{(n+1)} 1^{(n+1)})}) = b_{n+1, 2} \, ,
\end{align*}
we also see that $b_{n,1} = b_{n+1,3} = b_{n+1,2} = b_{n,4}$.
Thus the numbers $b_{n,k}$ for $n \geq 0$, $k = 1, 2, 3, 4$ are determined in a unique manner once $b_{0,1}$ is given.

Now (\ref{bn}) gives us 
\begin{align*}
b_{n,2} & = \tau (p_{r(0^{(n)} 1^{(n)} 0^{(n)})}) + b_{n,4} \\
            & = \tau (p_{r(0^{(n)} 0^{(n)} 1^{(n)} 0^{(n)})}) + \tau (p_{r(1^{(n)} 0^{(n)} 1^{(n)} 0^{(n)})}) + b_{n,1} \\
            & = \tau (p_{r(1^{(n)} 0^{(n)} 0^{(n)} 1^{(n)} 0^{(n)})}) + \tau (p_{r(1^{(n+1)} 1^{(n+1)})}) + b_{n,1} \\
            & = \tau (p_{r(1^{(n+1)} 0^{(n+1)} 0^{(n+1)})}) + b_{n+1,4} + b_{n,1} \\
            & = 2 b_{n+1,1} + b_{n,1},
\end{align*}
and so
$$
\left( \begin{array}{cc}
b_{n,1} \\
b_{n,2} \\
\end{array} \right)
 =
\left( \begin{array}{cc}
0 & 1 \\
2 & 1 \\
\end{array} \right)
\left( \begin{array}{cc}
b_{n+1,1} \\
b_{n+1,2} \\
\end{array} \right).
$$
Since $b_{0,1} + b_{0,2} = \frac{1}{2}$, if we put $t = b_{0,1}$, 
\begin{align}\label{tracevalue} 
\left( \begin{array}{cc}
b_{n,1} \\
b_{n,2} \\
\end{array} \right)
& =
\left( \begin{array}{cc}
-1/2 & 1/2 \\
1 & 0 \\
\end{array} \right)^n
\left( \begin{array}{cc}
b_{0,1} \\
b_{0,2} \\
\end{array} \right) \nonumber \\
& =
\left(
\left( \begin{array}{cc}
1/\sqrt{3} & 1/\sqrt{3} \\
2/\sqrt{3} & -1/\sqrt{3} \\
\end{array} \right)
\left( \begin{array}{cc}
1/2 & 0 \\
0 & -1 \\
\end{array} \right)
\left( \begin{array}{cc}
1/\sqrt{3} & 1/\sqrt{3} \\
2/\sqrt{3} & -1/\sqrt{3} \\
\end{array} \right)
\right)^n
\left( \begin{array}{cc}
b_{0,1} \\
b_{0,2} \\
\end{array} \right) \nonumber \\
& =
\frac{1}{3}
\left( \begin{array}{cc}
1 & 1 \\
2 & -1 \\
\end{array} \right)
\left( \begin{array}{cc}
1/2^n & 0 \\
0 & (-1)^n \\
\end{array} \right)
\left( \begin{array}{cc}
1 & 1 \\
2 & -1 \\
\end{array} \right)
\left( \begin{array}{cc}
t \\
1/2 - t \\
\end{array} \right) \nonumber\\
& =
\frac{1}{3}
\left( \begin{array}{cc}
(1/2)^{n+1} + (3 t - 1/2)(-1)^n \\
(1/2)^{n} - (3 t - 1/2)(-1)^n \\
\end{array} \right).
\end{align}
The state $\tau$ is faithful, so $b_{n,1}$ and $b_{n,2}$ must be strictly positive.
Thus $t = 1/6$, and hence $\tau = \phi$.  
\end{proof}

\begin{remark}\label{traceequation}
By (\ref{tracevalue}), the unique trace $\tilde{\phi}$ on $C^*(E_{\Z}, \CL, \bE_{\Z})$ satisfies
\begin{align}\label{trace_equation}
\tilde{\phi} (p_{r(0^{(n)} 0^{(n)})}) & = \tilde{\phi} (p_{r(1^{(n)} 1^{(n)})}) = 1/ (6 \cdot 2^{n}) \hbox{, and } \nonumber\\
\tilde{\phi} (p_{r(0^{(n)} 1^{(n)})}) & = \tilde{\phi} (p_{r(1^{(n)} 0^{(n)})}) = 1/ (3 \cdot 2^{n}).
\end{align}
By Proposition ~\ref{prop-newform}, every labeled path $\alpha$ with $|\alpha| \geq 2$ can be written in the form $\alpha = \gamma_0  i_1^{(n)} \dots i_k^{(n)} \gamma_1$ for $2 \leq k \leq 4$. 
We thus can compute $\tilde{\phi} (p_{r(\alpha)})$ for all labeled paths $\alpha$ using (\ref{trace_equation}).
For example, $\alpha = x_{[10, 31]}$ is written as
$$
\alpha = 0 1 0^{(2)} 1^{(2)} 0^{(2)} 0^{(2)} 1^{(2)} = 0 1 0^{(2)} 1^{(3)} 0^{(3)},
$$
and then $\tilde{\phi}(p_{r(\alpha)}) = \tilde{\phi}(p_{r(1^{(3)} 1^{(3)} 0^{(3)})}) = \tilde{\phi}(p_{r(1^{(3)} 1^{(3)})}) = 1/ (6 \cdot 2^3)$.
\end{remark}

\section{$K$-groups of $C^*(E_{\Z}, \CL, \bE_{\Z})$}\label{5}

\noindent Note that for $i^{(n)}$-blocks with a even number $n$, the first and last alphabets of $i^{(n)}$ are exactly $i$.
For example, $0^{(2)} = \dot{0}11\dot{0}$ but $0^{(3)} = \dot{0}110100\dot{1}$.    
Therefore it is convenient to present the preceding alphabet of labeled paths:
$$\CA^{2^2} 1^{(2)} 1^{(2)} =\{0^{(2)} 1^{(2)} 1^{(2)} \} \mbox{ and } \CA \, 1^{(2)} 1^{(2)} =\{0 \, 1^{(2)} 1^{(2)} \},$$
while $\CA^{2^3} 1^{(3)} 1^{(3)} =\{0^{(3)} 1^{(3)} 1^{(3)} \}$ and $\CA \, 1^{(3)} 1^{(3)} =\{1 \, 1^{(3)} 1^{(3)} \}$.

We first briefly review the K-groups of labeled graph $C^*$-algebras.
We let $\Omega_l = \{[v]_l :  v \in E^0 \}$, and $\Omega = \cup_{l \geq 1} \Omega_l$ be the set of all generalized vertices. 
By $\Z(\Omega)$ we denote the additive group ${\rm span}_{\Z} \{\chi_{[v]_l} : [v]_l \in \Omega \}$. 

Let $({\rm I} - \Phi) : \Z(\Omega) \to \Z(\Omega)$ be the linear map defined by
$$
({\rm I} - \Phi) (\chi_{[v]_l}) = \chi_{[v]_l} - \sum_{a \in \CA} \chi_{r([v]_l, a)} \quad  \mbox{ for } \quad [v]_l \in \Omega.
$$
Then the following is known in ~\cite[Corollary 8.3]{BCP}.
\begin{enumerate}
\item[(i)]  $K_1(C^*(E, \CL, \bE)) \cong \ker({\rm I}-\Phi)$, 
\item[(ii)] $K_0(C^*(E, \CL, \bE)) \cong {\rm coker}({\rm I}-\Phi)$ via the map $[p_{[v]_l}]_0 \mapsto \chi_{[v]_l} + {\rm Im}({\rm I}-\Phi)$.
\end{enumerate}

As shown in ~\cite{JKKP}, $C^*(E_{\Z}, \CL, \bE_{\Z})$ is isomorphic to the crossed product $C(X) \rtimes_\sigma \Z$ of a two-sided Thue--Morse subshift $(X, \sigma)$, and hence 
$$
K_1(C^*(E_{\Z}, \CL, \bE_{\Z})) \cong \Z
$$ 
follows from ~\cite{GPS}.
To compute $K_0$-group of $C^*(E_{\Z}, \CL, \bE_{\Z})$, first note that 
$$
\sum_{a \in \CA} p_{r(a \alpha)} = p_{r(\alpha)} = \sum_{a \in \CA}s_a p_{r(\alpha a)} s_a^*
$$ 
yields that
\begin{equation}\label{R1}
\sum_{a \in \CA} [p_{r(a \alpha)}]_0 = [p_{r(\alpha)}]_0 = \sum_{a \in \CA} [p_{r(\alpha a)}]_0. 
\end{equation}
Combining (\ref{R1}) and Proposition ~\ref{prop-newform}, we can rewrite $[p_{r(\alpha)}]_0$ as
$$
\sum k_i [p_{r(\beta_i)}]_0
$$
where $k_i \in \N$ and $\beta_i \in \{0^{(n)}0^{(n)}1^{(n)}, 0^{(n)}1^{(n)}0^{(n)}, \dots, 1^{(n)}1^{(n)}0^{(n)} \}$ for some $n \geq 0$.
(This expression need not be unique.)
We begin with a relation between $[p_{\beta_i}]$'s.

\begin{lem}\label{lem a_n b_n}
For all $n \geq 0$, we have
\begin{align}
[p_{r(0^{(n)}1^{(n)}0^{(n)})}]_0 &= [p_{r(1^{(n)}0^{(n)}1^{(n)})}]_0,\label{eqn a_n} \\
[p_{r(0^{(n)}0^{(n)}1^{(n)})}]_0 = [p_{r(1^{(n)}0^{(n)}0^{(n)})}]_0 &= [p_{r(0^{(n)}1^{(n)}1^{(n)})}]_0 = [p_{r(1^{(n)}1^{(n)}0^{(n)})}]_0.\label{eqn b_n}
\end{align}
Moreover, if we let
\begin{align}\label{def a_n b_n}
a_n := [p_{r(0^{(n)}1^{(n)}0^{(n)})}]_0 \quad \mbox{and} \quad b_n := [p_{r(0^{(n)}0^{(n)}1^{(n)})}]_0, 
\end{align}
then we have $a_n = 2 b_{n+1}, \, b_n = a_{n+1} + b_{n+1}$, and $a_n \neq b_n$.
\end{lem}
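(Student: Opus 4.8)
The plan is to work entirely inside the $K_0$-relations furnished by (\ref{R1}) together with the rigidity of the Thue--Morse word, treating each length-$2^n$ block $0^{(n)},1^{(n)}$ as a single ``super-letter'' and reading off which super-words occur from the fractal identity (\ref{defofmorse}), which says the super-sequence is again Thue--Morse. The one recurring move is this: extend a range projection by a full $i^{(n)}$-block, applying (\ref{R1}) $2^n$ times, so that $[p_{r(\mu)}]_0 = \sum_{\nu}[p_{r(\mu\nu)}]_0$ (or the analogous left sum). Whenever the extension site already abuts two adjacent $i^{(n)}$-blocks, Lemma~\ref{followerset} forces the new block to be a clean $0^{(n)}$ or $1^{(n)}$, so the sum runs over at most the two super-letters; and overlap-freeness (Remark~\ref{overlap}) kills any extension that would create a cube $0^{(n)}0^{(n)}0^{(n)}$ or $1^{(n)}1^{(n)}1^{(n)}$, collapsing such a two-term sum to a single forced term.

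I would first record the forced identities coming from the cube-avoidance: left-extending $0^{(n)}0^{(n)}1^{(n)}$ and right-extending $1^{(n)}0^{(n)}0^{(n)}$ both give $[p_{r(1^{(n+1)}0^{(n+1)})}]_0$ (the rival extensions $0^{(n)}0^{(n)}0^{(n)}1^{(n)}$, $1^{(n)}0^{(n)}0^{(n)}0^{(n)}$ carry cubes), and symmetrically $[p_{r(0^{(n)}1^{(n)}1^{(n)})}]_0=[p_{r(1^{(n)}1^{(n)}0^{(n)})}]_0=[p_{r(0^{(n+1)}1^{(n+1)})}]_0$; this yields two of the three equalities in (\ref{eqn b_n}). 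Next I would prove the symmetric identity $[p_{r(0^{(m)}1^{(m)})}]_0=[p_{r(1^{(m)}0^{(m)})}]_0$ for all $m$ by right-extending the first and left-extending the second: both equal $[p_{r(0^{(m)}1^{(m)}0^{(m)})}]_0+[p_{r(0^{(m)}1^{(m)}1^{(m)})}]_0$, once one substitutes $[p_{r(0^{(m)}1^{(m)}1^{(m)})}]_0=[p_{r(1^{(m)}1^{(m)}0^{(m)})}]_0$ from the previous step. Applying this at level $m=n+1$ then identifies $[p_{r(1^{(n+1)}0^{(n+1)})}]_0$ with $[p_{r(0^{(n+1)}1^{(n+1)})}]_0$, which is exactly the missing equality of (\ref{eqn b_n}). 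Finally, evaluating $[p_{r(0^{(m)}1^{(m)})}]_0$ once by a left and once by a right block-extension gives $b_m+[p_{r(1^{(m)}0^{(m)}1^{(m)})}]_0=a_m+[p_{r(0^{(m)}1^{(m)}1^{(m)})}]_0$, and combining with (\ref{eqn b_n}) produces (\ref{eqn a_n}).

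For the recursions I would use the clean forms just obtained. Since $b_n=[p_{r(1^{(n+1)}0^{(n+1)})}]_0$, right-extending by one $(n+1)$-block lands in $[p_{r(1^{(n+1)}0^{(n+1)}0^{(n+1)})}]_0+[p_{r(1^{(n+1)}0^{(n+1)}1^{(n+1)})}]_0$, which are the super-$100$ and super-$101$ classes $b_{n+1}$ and $a_{n+1}$, giving $b_n=a_{n+1}+b_{n+1}$. For $a_n=2b_{n+1}$ I would left-extend $a_n=[p_{r(0^{(n)}1^{(n)}0^{(n)})}]_0$ into $[p_{r(0^{(n)}0^{(n)}1^{(n)}0^{(n)})}]_0+[p_{r(1^{(n)}0^{(n)}1^{(n)}0^{(n)})}]_0$; the second term is super-$1010=1^{(n+1)}1^{(n+1)}$, which after a forced left-extension (cube $111$ excluded) equals $b_{n+1}$, while for the first term I would re-group by Lemma~\ref{fiveblocks} (the expansion is forced because $0^{(n)}0^{(n)}$ is not an $(n+1)$-block) and then chase forced extensions down to the super-$100$ class $b_{n+1}$. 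Summing gives $a_n=2b_{n+1}$.

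The genuine obstacle is the strict inequality $a_n\ne b_n$, since it is invisible to the unique trace: by (\ref{trace_equation}) one checks $\tilde\phi(p_{r(0^{(n)}1^{(n)}0^{(n)})})=\tilde\phi(p_{r(0^{(n)}0^{(n)}1^{(n)})})=1/(6\cdot 2^{n})$, so the trace cannot separate the two classes. The recursions give $a_n-b_n=-(a_{n+1}-b_{n+1})$, so it suffices to show $a_0\ne b_0$. I would separate them by a \emph{second} real functional $\psi$ on $K_0$ that is not a state, built from the non-Perron eigendirection (eigenvalue $-1$, rather than $1/2$) of the very $2\times 2$ renormalization matrix that governs the trace in Proposition~\ref{unique trace}; by the consistency analysis of that proof this eigendirection extends to a homomorphism $\Z(\Omega)\to\R$ vanishing on ${\rm Im}({\rm I}-\Phi)$, and a direct evaluation then gives $\psi(a_0-b_0)\ne 0$. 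The delicate point — and the part that forces the most careful use of the self-similarity of the Thue--Morse word — is verifying that this $\psi$ is genuinely well-defined on all of $K_0$ and not merely on the two-block classes; that is where I expect the real work to lie.
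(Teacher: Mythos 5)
Your derivations of \eqref{eqn a_n}, \eqref{eqn b_n} and of the recursions $a_n=2b_{n+1}$, $b_n=a_{n+1}+b_{n+1}$ are correct, and they use exactly the paper's machinery: iterate (\ref{R1}) across a full $2^n$-block, invoke Lemma~\ref{followerset} to force clean block extensions adjacent to two consecutive $i^{(n)}$-blocks, and use overlap-freeness (Remark~\ref{overlap}) to discard cube-creating extensions. Your organization is in places cleaner than the paper's: the paper proves $[p_{r(0^{(n)}0^{(n)})}]_0=[p_{r(1^{(n)}1^{(n)})}]_0$ by a chain through level $n+1$ and obtains \eqref{eqn a_n} from a six-block computation, whereas you obtain the missing link in \eqref{eqn b_n} from the symmetric identity $[p_{r(0^{(m)}1^{(m)})}]_0=[p_{r(1^{(m)}0^{(m)})}]_0$ applied at $m=n+1$, and then \eqref{eqn a_n} by comparing the left and right block-expansions of $[p_{r(0^{(m)}1^{(m)})}]_0$. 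The one sketchy spot, the ``chase'' reducing $[p_{r(0^{(n)}0^{(n)}1^{(n)}0^{(n)})}]_0$ to $b_{n+1}$, does close: the left extension is forced by cube-exclusion to $1^{(n)}0^{(n)}0^{(n)}1^{(n)}0^{(n)}$, and the right extension is then forced because the super-word $100100$ is excluded by exactly the argument of Remark~\ref{overlap}, landing on $1^{(n+1)}0^{(n+1)}0^{(n+1)}$, which is $b_{n+1}$ by \eqref{eqn b_n}.

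The genuine gap is the final claim $a_n\neq b_n$, and you have flagged it yourself. Your observation that the trace cannot separate the two classes is correct, and a functional built from the eigenvalue $-1$ of the renormalization matrix is the right heuristic; but the proposal never constructs $\psi$, and that verification is not a deferrable detail --- it \emph{is} the claim. A homomorphism $\psi:\Z(\Omega)\to\R$ descends to $K_0$ exactly when $\psi(\chi_{r(\alpha)})=\psi(\chi_{r(\alpha 0)})+\psi(\chi_{r(\alpha 1)})$ for every $\alpha$, i.e.\ when $\psi$ is a signed analogue of an element of $\mathcal{S}_{\chi}$. The ``consistency analysis'' of Proposition~\ref{unique trace} that you appeal to proves only \emph{uniqueness} of such functionals among states; it produces no signed one, and even the existence of the trace itself required the separate induction of Theorem~\ref{main} (the symmetry $\phi(p_{r(\alpha)})=\phi(p_{r(\alpha^{-1})})$ together with the case analysis of Lemma~\ref{extension-number}). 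Deducing the existence of $\psi$ instead from the inductive-limit presentation of $K_0$ in the proposition following the lemma is circular, since that presentation presupposes that $a_n,b_n$ satisfy no relations beyond the stated ones, in particular $a_n\neq b_n$. So, as written, your proposal establishes three of the four assertions but not the inequality. Note that the paper handles this point quite differently (and very tersely): from $a_n-b_n=-(a_{n+1}-b_{n+1})$ it concludes directly that $\chi_{r(0^{(n)}1^{(n)}0^{(n)})}-\chi_{r(0^{(n)}0^{(n)}1^{(n)})}\notin{\rm Im}({\rm I}-\Phi)$, constructing no separating functional at all. To complete your route you would have to define $\psi$ on every $\chi_{r(\alpha)}$ (say via the block decomposition of Proposition~\ref{prop-newform}, with values $\pm 2/3,\mp 1/3$ alternating in $n$ on the $a_n$- and $b_n$-classes) and verify the right-extension identity for all $\alpha$ --- in effect a signed rerun of the proof of Theorem~\ref{main}.
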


\begin{proof}
First notice that the equality (\ref{R1}) yields that 
$$
[p_{r(1^{(n)}0^{(n)}0^{(n)})}]_0 = [p_{r(0^{(n)}0^{(n)})}]_0 = [p_{r(0^{(n)}0^{(n)}1^{(n)}}]_0
$$
and
$$
[p_{r(0^{(n)}1^{(n)}1^{(n)})}]_0 = [p_{r(1^{(n)}1^{(n)})}]_0 = [p_{r(1^{(n)}1^{(n)}0^{(n)}}]_0.
$$
The equality
\begin{align}\label{b_n}
[p_{r(0^{(n)}0^{(n)})}]_0 & = [p_{r(1^{(n)}0^{(n)}0^{(n)}1^{(n)})}]_0 = [p_{r(1^{(n+1)}0^{(n+1)})}]_0 \nonumber \\
& = [p_{r(1^{(n+1)}0^{(n+1)}0^{(n+1)})}]_0 + [p_{r(1^{(n+1)}0^{(n+1)}1^{(n+1)})}]_0 \\
& = [p_{r(0^{(n+1)}0^{(n+1)}1^{(n+1)})}]_0 + [p_{r(1^{(n+1)}0^{(n+1)}1^{(n+1)})}]_0 \nonumber \\
& = [p_{r(0^{(n+1)}1^{(n+1)}}]_0 =  [p_{r(1^{(n)}1^{(n)})}]_0 \nonumber
\end{align}
shows that (\ref{eqn b_n}) holds.
From this, we obtain that
\begin{align}\label{a_n}
[p_{r(0^{(n)}1^{(n)}0^{(n)})}]_0 & = [p_{r(0^{(n)}1^{(n)}0^{(n)}0^{(n)})}]_0 +  [p_{r(0^{(n)}1^{(n)}0^{(n)}1^{(n)})}]_0 \nonumber\\
& = [p_{r(1^{(n)}0^{(n)}1^{(n)}0^{(n)}0^{(n)}1^{(n)})}]_0 + [p_{r(0^{(n+1)}0^{(n+1)})}]_0 \\
& = [p_{r(1^{(n+1)}1^{(n+1)}0^{(n+1)})}]_0 + [p_{r(0^{(n+1)}0^{(n+1)})}]_0 \nonumber \\
& = 2 [p_{r(0^{(n+1)}0^{(n+1)})}]_0, \nonumber
\end{align}
and similarly
$$
[p_{r(1^{(n)}0^{(n)}1^{(n)})}]_0 = 2 [p_{r(0^{(n+1)}0^{(n+1)})}]_0,
$$
so (\ref{eqn a_n}) holds.

We already obtain from (\ref{b_n}) that $b_n = a_{n+1} + b_{n+1}$ and from (\ref{a_n}) that $a_n = 2 b_{n+1}$.
Combining these we get 
$$
a_n - b_n = - (a_{n+1} - b_{n+1})
$$
which then implies that $a_n - b_n$ does not belong to ${\rm Im}({\rm I}-\Phi)$, and hence $a_n \neq b_n$.
\end{proof}

As a result, we have the following. 

\begin{prop}
Let $a_n$, $b_n$ ($n \geq 0$) be elements in $K_0(C^*(E_{\Z}, \CL, \bE_{\Z}))$ given by (\ref{def a_n b_n}).
Then the $K_0$-group of $C^*(E_{\Z}, \CL, \bE_{\Z})$ is
\begin{align*}
K_0(C^*(E_{\Z}, \CL, \bE_{\Z})) & = {\rm span}_{\Z}\{a_n, b_n : n \geq 0, \, a_n = 2 b_{n+1}, \, b_n = a_{n+1} + b_{n+1}\} \\
& \cong \lim_{\to} \bigg( \Z^2, \begin{pmatrix} 
0 & 1 \\
2 & 1 
\end{pmatrix} \bigg),
\end{align*}
where each $\Z^2$ is ordered by
$$
(n_1, n_2) \geq 0 \iff n_1, n_2 \geq 0
$$
with an order unit $[1]_0 = 2 a_0 + 4 b_0$.
\end{prop}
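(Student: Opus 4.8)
The plan is to compute $K_0$ from the identification $K_0(C^*(E_{\Z},\CL,\bE_{\Z}))\cong{\rm coker}({\rm I}-\Phi)$ recalled above, feeding in the relations of Lemma~\ref{lem a_n b_n}, and then to realize the answer as the stationary inductive limit by an explicit comparison map. \textbf{Generation.} First I would show that $\{a_n,b_n:n\geq 0\}$ generates $K_0$. Since ${\rm coker}({\rm I}-\Phi)$ is generated by the classes $[p_{[v]_l}]_0=[p_{r(\alpha)}]_0$, and by (\ref{R1}) together with Proposition~\ref{prop-newform} every such class is an $\N$-linear combination of classes $[p_{r(\beta)}]_0$ with $\beta$ a three-block word ${i_1}^{(n)}{i_2}^{(n)}{i_3}^{(n)}$, it suffices to note that the identities (\ref{eqn a_n})--(\ref{eqn b_n}) collapse all six admissible three-block words to $a_n$ or $b_n$. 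Hence $K_0={\rm span}_{\Z}\{a_n,b_n\}$.

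\textbf{The comparison map.} The relations $a_n=2b_{n+1}$ and $b_n=a_{n+1}+b_{n+1}$ of Lemma~\ref{lem a_n b_n} are precisely the statement that the assignment $e_1\mapsto a_n$, $e_2\mapsto b_n$ on the $n$-th copy of $\Z^2$ intertwines the connecting map $M=\left(\begin{smallmatrix}0&1\\2&1\end{smallmatrix}\right)$, since $Me_1=2e_2$ and $Me_2=e_1+e_2$. This yields a well-defined homomorphism $q:L:=\lim_{\to}(\Z^2,M)\to K_0$, which is surjective by the previous step. As $\det M=-2\neq 0$, the maps in the system are injective, so each element of $L$ has a single-level representative $[v,n]$ that vanishes iff $v=0$; therefore $\ker q=0$ will follow once I show that $a_n,b_n$ are $\Z$-linearly independent in $K_0$ for every $n$.

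\textbf{Linear independence --- the main obstacle.} The difficulty is that the unique trace does not separate $a_n$ and $b_n$: using Remark~\ref{traceequation} and $0^{(n)}0^{(n)}0^{(n)},\,1^{(n)}1^{(n)}1^{(n)}\notin\CL^*(E_{\Z})$, a direct computation gives $\tilde\phi_*(a_n)=\tilde\phi_*(b_n)=1/(6\cdot 2^{n})$, where $\tilde\phi_*$ is induced on $K_0$ by the trace of Theorem~\ref{main}. Thus $v_1a_n+v_2b_n=0$ only forces $v_1+v_2=0$ after applying $\tilde\phi_*$, reducing the problem to $v_1(a_n-b_n)=0$. Since $a_n\neq b_n$ by Lemma~\ref{lem a_n b_n}, I must show that $a_n-b_n$ has infinite order. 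For this I would invoke torsion-freeness of $K_0$: through the isomorphism $C^*(E_{\Z},\CL,\bE_{\Z})\cong C(X)\rtimes_\sigma\Z$ with the minimal Thue--Morse subshift, $K_0$ is a simple dimension group and hence torsion-free --- the same description already used for $K_1\cong\Z$. (Alternatively, one can stay inside the labeled-graph picture and build directly a $\Phi$-invariant ``signed'' functional $K_0\to\Z$ from the $\lambda=-1$ eigendirection of $M$, which detects $a_n-b_n=(-1)^{n}(a_0-b_0)$; since $-1$ is a unit in $\Z$ this eigendirection survives integrally.) Either way $v_1=v_2=0$, so $q$ is a group isomorphism.

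\textbf{Order and order unit.} Finally I would identify the order. The elements $a_n,b_n$ are classes of genuine projections $p_{r(\cdot)}$, hence lie in $K_0^+$, and because $M$ has nonnegative entries the inductive limit $L^+$ of the coordinatewise cones $\Z_{\geq0}^2$ maps into $K_0^+$. For the reverse inclusion I would use that $K_0^+$ is the sub-semigroup generated by the vertex-projection classes $[p_{r(\alpha)}]_0$; by the Generation step each of these is a \emph{nonnegative} combination $c_1a_n+c_2b_n$ (the coefficients $k_i\in\N$ appearing there are nonnegative), so it lies in $L^+$. Hence $K_0^+=L^+$, the stated coordinatewise inductive-limit order. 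For the order unit, $1=s_0s_0^*+s_1s_1^*=p_{r(0)}+p_{r(1)}$, and expanding each $p_{r(i)}$ into length-three words via (\ref{R1}) and reducing by Lemma~\ref{lem a_n b_n} gives $[p_{r(0)}]_0=[p_{r(1)}]_0=a_0+2b_0$, whence $[1]_0=2a_0+4b_0$, as claimed. This would complete the proof; the single genuinely new ingredient beyond Lemma~\ref{lem a_n b_n} is the infinite order of $a_n-b_n$, which is where I expect the real work to lie.
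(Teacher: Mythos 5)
Your proposal is correct in substance and is actually \emph{more} complete than the paper itself, which states this proposition with no proof beyond the phrase ``As a result'' --- its entire justification is Lemma~\ref{lem a_n b_n} plus the generation discussion preceding it. Your generation step, the intertwining check $Me_1=2e_2$, $Me_2=e_1+e_2$ against $a_n=2b_{n+1}$, $b_n=a_{n+1}+b_{n+1}$, and the order-unit computation $[1]_0=[p_{r(0)}]_0+[p_{r(1)}]_0=2(a_0+2b_0)$ all coincide with what the paper leaves implicit. Where you genuinely go beyond the paper is injectivity: you correctly observe that the trace cannot separate $a_n$ from $b_n$ (both map to $1/(6\cdot 2^n)$), so linear independence reduces to $a_n-b_n$ having \emph{infinite} order, and that the conclusion $a_n\neq b_n$ of Lemma~\ref{lem a_n b_n} --- which is all the paper establishes, and by a rather telegraphic argument at that --- is strictly weaker than what the claimed isomorphism with $\lim_{\to}(\Z^2,M)$ requires. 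Your fix, torsion-freeness of $K_0$ via the identification $C^*(E_{\Z},\CL,\bE_{\Z})\cong C(X)\rtimes_\sigma\Z$ of \cite{JKKP} and the fact that the $K_0$-group of a Cantor minimal crossed product is a simple dimension group \cite{GPS}, is legitimate and uses only references the paper already invokes for $K_1\cong\Z$.

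Two caveats on steps you assert rather than prove. First, in the order identification, the claim that $K_0^+$ equals the sub-semigroup generated by the classes $[p_{r(\alpha)}]_0$ does \emph{not} follow from \cite[Corollary 8.3]{BCP} as recalled in Section~\ref{5}, which is a purely group-level statement; it does follow from the ordered version of the crossed-product computation (the positive cone is the image of the nonnegative integer-valued functions on $X$, which are $\N$-combinations of characteristic functions of cylinder sets), so this should be cited explicitly --- or, alternatively, one can finish by noting that both $\lim_{\to}(\Z^2,M)$ and $K_0$ are simple dimension groups whose unique states are intertwined by $q$, and in a simple dimension group the strictly positive elements are exactly those with strictly positive state value. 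Second, your parenthetical alternative for torsion-freeness (an integral ``signed'' functional built from the $\lambda=-1$ eigendirection) is the one step I would not accept as written: well-definedness on ${\rm coker}({\rm I}-\Phi)$ requires verifying that the functional kills \emph{all} of ${\rm Im}({\rm I}-\Phi)$, not only the relations recorded in Lemma~\ref{lem a_n b_n}, and that verification is essentially equivalent to the structure theorem being proved. Your primary route does not need it, so the proof stands with the citation adjustments above.
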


From the Remark ~\ref{traceequation}, we see that 
$$
K_0(\tilde{\phi})(a_n) = K_0(\tilde{\phi})(b_n) = 1/ (6 \cdot 2^{n})
$$
for all $n \geq 0$. 
Then the trace map $\tilde{\phi}: C^*(E_{\Z}, \CL, \bE_{\Z}) \to \C$ induces a surjective (not injetive) group homomorphism
$$
K_0(\tilde{\phi}): K_0(C^*(E_{\Z}, \CL, \bE_{\Z})) \to \Q(2^{\infty} \cdot 3)
$$
where $\Q(2^{\infty} \cdot 3)$ is an additive subgroup of $\Q$ consisting of the fractions $x / y$ for $x \in \Z$ and $y = 2^n 3$, $n \geq 1$.
It is easily seen that
$$
K_0(C^*(E_{\Z}, \CL, \bE_{\Z})) / {\rm ker}(K_0(\tilde{\phi})) \cong K_0(C^*(E_{\Z}, \CL, \bE_{\Z})) / \langle a_0 - b_0 \rangle \cong \Q(2^{\infty} \cdot 3).
$$

\section{Representation of $C^*(E_{\Z}, \CL, \bE_{\Z})$ on $\ell^2(\Z)$}\label{6}

\noindent In this section, we introduce a representation of $C^*(E_{\Z}, \CL, \bE_{\Z})$ on the Hilbert space $\ell^2(\Z)$.
Recall that the directed graph $E_{\Z}$ is as follows: 
\vskip 1pc
\hskip 2pc 
\xy /r0.3pc/:(-44.2,0)*+{\cdots};(44.3,0)*+{\cdots};
(-40,0)*+{\bullet}="V-4";
(-30,0)*+{\bullet}="V-3";
(-20,0)*+{\bullet}="V-2";
(-10,0)*+{\bullet}="V-1"; (0,0)*+{\bullet}="V0";
(10,0)*+{\bullet}="V1"; (20,0)*+{\bullet}="V2";
(30,0)*+{\bullet}="V3";
(40,0)*+{\bullet}="V4";
 "V-4";"V-3"**\crv{(-40,0)&(-30,0)};
 ?>*\dir{>}\POS?(.5)*+!D{};
 "V-3";"V-2"**\crv{(-30,0)&(-20,0)};
 ?>*\dir{>}\POS?(.5)*+!D{};
 "V-2";"V-1"**\crv{(-20,0)&(-10,0)};
 ?>*\dir{>}\POS?(.5)*+!D{};
 "V-1";"V0"**\crv{(-10,0)&(0,0)};
 ?>*\dir{>}\POS?(.5)*+!D{};
 "V0";"V1"**\crv{(0,0)&(10,0)};
 ?>*\dir{>}\POS?(.5)*+!D{};
 "V1";"V2"**\crv{(10,0)&(20,0)};
 ?>*\dir{>}\POS?(.5)*+!D{};
 "V2";"V3"**\crv{(20,0)&(30,0)};
 ?>*\dir{>}\POS?(.5)*+!D{};
 "V3";"V4"**\crv{(30,0)&(40,0)};
 ?>*\dir{>}\POS?(.5)*+!D{};
 (-35,1.5)*+{e_{-4}};(-25,1.5)*+{e_{-3}};
 (-15,1.5)*+{e_{-2}};(-5,1.5)*+{e_{-1}};(5,1.5)*+{e_{0}};
 (15,1.5)*+{e_{1}};(25,1.5)*+{e_{2}};(35,1.5)*+{e_{3}};
 (0.1,-2.5)*+{v_0};(10.1,-2.5)*+{v_1};
 (-9.9,-2.5)*+{v_{-1}};
 (-19.9,-2.5)*+{v_{-2}};
 (-29.9,-2.5)*+{v_{-3}};
 (-39.9,-2.5)*+{v_{-4}}; 
 (20.1,-2.5)*+{v_{2}};
 (30.1,-2.5)*+{v_{3}};
 (40.1,-2.5)*+{v_{4}}; 
\endxy 
\vskip 1pc

\noindent For a canonical orthnormal basis $\{v_n : n \in \Z\}$ for $\ell^2(\Z)$, we define shift operators $t_0, t_1$ on $\ell^2(\Z)$ by
$$
t_i (v_n)= \left\{ \begin{array}{ll} v_{n-1}, & \hbox{ if } \quad \CL(e_{n-1}) = i \\
                                                  0 & \hbox{ otherwise.}
                \end{array}
                                \right.  
$$
Then $t_0, t_1$ are partial isometries which have mutually orthogonal ranges, and $t_0 + t_1$ is a bilateral shift on $\ell^2(\Z)$. 
Note that these two shift operators are not periodic.

\begin{prop}
There exists an isomorphism $\Phi : C^*(E_{\Z}, \CL, \bE_{\Z}) \to C^*(t_0, t_1)$ such that $\Phi(s_i) = t_i$, $i = 0, 1$.
\end{prop}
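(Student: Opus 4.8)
The plan is to build $\Phi$ as a surjection out of the universal property of $C^*(E_{\Z}, \CL, \bE_{\Z})$ and then promote it to an isomorphism using simplicity. First I would identify the vertex set $E^0 = \{v_n : n \in \Z\}$ with the canonical orthonormal basis of $\ell^2(\Z)$, and for each $A \in \bE_{\Z} \subset 2^{E^0}$ define $P_A \in B(\ell^2(\Z))$ to be the orthogonal projection onto $\overline{\rm span}\{v_n : v_n \in A\}$. The claim is that the family $\{t_0, t_1\} \cup \{P_A : A \in \bE_{\Z}\}$ is a representation of the labeled space $(E_{\Z}, \CL, \bE_{\Z})$, i.e.\ that it satisfies relations (i)--(iv) of the definition of a representation.

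Each relation reduces to a computation on the basis vectors $v_m$. Relation (i) is immediate because the $P_A$ are projections onto coordinate subspaces and $P_\emptyset = 0$. For (iii) one uses that the unique edge $e_{n-1}$ entering $v_n$ carries the label $\omega_{n-1}$: this gives $t_a^* t_a = P_{r(a)}$, since $t_a$ is nonzero exactly on those $v_n$ with $\omega_{n-1} = a$, and $t_a^* t_b = 0$ for $a \neq b$ because a single edge cannot carry two labels. Relations (ii) and (iv) are checked by evaluating both sides on $v_m$ and tracking the label $\omega_m$ of the unique out-edge $e_m$ of $v_m$; the key simplification is that $t_0 + t_1$ is a bilateral shift, so $t_0 t_0^* + t_1 t_1^* = I$, and (iv) for a general $A$ is simply the localization of this identity to the coordinates indexed by $A$.

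By the universal property of $C^*(E_{\Z}, \CL, \bE_{\Z})$ (\cite{BP1}), this representation induces a $*$-homomorphism $\Phi : C^*(E_{\Z}, \CL, \bE_{\Z}) \to B(\ell^2(\Z))$ with $\Phi(s_a) = t_a$ and $\Phi(p_A) = P_A$. Its image is a $C^*$-subalgebra containing $t_0, t_1$; moreover every $P_A$ lies in $C^*(t_0, t_1)$, since $P_{r(\alpha)} = t_\alpha^* t_\alpha$ for $t_\alpha := t_{\alpha_1}\cdots t_{\alpha_{|\alpha|}}$ and, by (\ref{accommodating set form}), each $A \in \bE_{\Z}$ is a finite disjoint union of range sets $r(\alpha)$, whence $P_A$ is a sum of the corresponding orthogonal projections. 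Hence the image of $\Phi$ is exactly $C^*(t_0, t_1)$, so $\Phi$ is surjective.

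For injectivity I would invoke simplicity. The algebra $C^*(E_{\Z}, \CL, \bE_{\Z})$ is simple, so $\ker \Phi$ is either $0$ or the whole algebra; since $\Phi(s_0) = t_0 \neq 0$, we get $\ker \Phi = 0$ and $\Phi$ is an isomorphism onto $C^*(t_0, t_1)$. The only step demanding genuine care is the verification of relation (iv) for \emph{every} $A \in \bE_{\Z}$, not merely for $A = E^0$; I expect this to be the main obstacle, though it remains routine once reduced to the basis-vector computation above. As an alternative to citing simplicity, one could run the gauge-invariant uniqueness theorem: the unitaries $W_z v_n = z^{-n} v_n$ ($z \in \T$) satisfy $\Ad W_z(t_a) = z t_a$ and $\Ad W_z(P_A) = P_A$, so they implement a gauge action compatible with $\Phi$, and since $P_A \neq 0$ for every nonempty $A$ this forces $\Phi$ to be injective; appealing to simplicity is the shorter route.
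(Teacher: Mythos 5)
Your proposal is correct and takes essentially the same route as the paper: exhibit $\{t_0,t_1\}$ together with the projections onto coordinate subspaces as a representation of $(E_{\Z},\CL,\bE_{\Z})$, obtain $\Phi$ from the universal property, observe surjectivity (via $P_{r(\alpha)}=t_\alpha^*t_\alpha$ and the disjoint-union form of sets in $\bE_{\Z}$), and deduce injectivity from simplicity. The only cosmetic differences are that the paper defines its projections directly as $q_{r(\alpha)}:=t_\alpha^* t_\alpha$ rather than as coordinate projections, and your alternative argument via the gauge-invariant uniqueness theorem is precisely the method the paper itself employs immediately afterwards for arbitrary labelings of $E_{\Z}$.
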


\begin{proof}
For a labeled path $\alpha \in \CL^*(E)$, we define a partial isometry $t_{\alpha}$ by $t_{\alpha_1} t_{\alpha_2} \cdots t_{\alpha_{|\alpha|}}$, and a projection $q_{r(\alpha)}$ by $t_{\alpha}^* t_{\alpha}$.
An easy computation shows that $\{t_{\alpha}, q_{r(\alpha)} : \alpha \in \CL^*(E)\}$ is a representation of $(E_{\Z}, \CL, \bE_{\Z})$. 
Hence there is a $*$-homorphism $\Phi : C^*(E_{\Z}, \CL, \bE_{\Z}) \to C^*(t_0, t_1)$ which maps $s_i \mapsto t_i$.
Obviously $\Phi$ is surjective.
Since $C^*(E_{\Z}, \CL, \bE_{\Z})$ is simple, $\Phi$ is also injective.  
\end{proof}

In general, the above proposition is also true for arbitrary labeled graphs over $E_{\Z}$. 
Let $\CA$ be an (countable) alphabet set and let $\CL : E_{\Z} \to \CA$ be an arbitrary labeling map.
Then the shift operators $\{t_a : a \in \CA\}$ on $\ell^2(\Z)$ given by
$$
t_a (v_n)= \left\{ \begin{array}{ll} v_{n-1}, & \hbox{ if } \quad \CL(e_{n-1}) = a,\\
                                                  0, & \hbox{ otherwise}
                \end{array}
                                \right.  
$$
are well defined, and there exists an isomorphism $\Phi : C^*(E_{\Z}, \CL, \bE) \to C^*(t_a : a \in \CA)$ such that $\Phi(s_a) = t_a$ for $a \in \CA$. 
To prove this, it is enough to show that $\Phi$ is injective.
Note that there exists a strongly continuous action $\lambda : \T \to {\rm Aut}(C^*(t_a : a \in \CA))$ defined by $\lambda_z (t_a) = z t_a$ for $z \in \T$. 
Moreover it is easy to see that $\lambda_z \circ \Phi = \Phi \circ \gamma_z$, where $\gamma$ is a canonical gauge action on $C^*(E_{\Z}, \CL, \bE)$. 
Therefore ``The Gauge-Invariant Uniqueness Theorem (\cite[Theorem 2.7]{BPW})" guarantees that $\Phi$ is injective.


\begin{thebibliography}{4}

\bibitem{BCP} T. Bates, T. M. Carlsen, and D. Pask, 
{\em $C^*$-algebras of labeled graphs III - $K$-theory computations}, 
Ergod. Th. $\&$ Dynam. Sys., available on CJO2015. doi:10.1017/etds.2015.62. 

\bibitem{BP1} T. Bates and D. Pask, 
{\em $C^*$-algebras of labeled graphs},
J. Operator Theory, \textbf{57}(2007), 101--120.

\bibitem{BP2} T. Bates and D. Pask, 
{\em $C^*$-algebras of labeled graphs II - simplicity results},
Math. Scand. \textbf{104}(2009), no. 2, 249--274.
  
\bibitem{BPRS} T. Bates, D. Pask, I. Raeburn, and W. Szymanski, 
{\em The $C^*$-algebras of row-finite graphs}, 
New York J. Math. \textbf{6}(2000), 307--324.

\bibitem{BPW} T. Bates, D. Pask, and P. Willis, 
{\em Group actions on labeled graphs and their $C^*$-algebras}, 
Illinois J. Math., \textbf{56}(2012), 1149--1168.

\bibitem{CK} J. Cuntz and W. Krieger, 
{\em A class of $C^*$-algebras and topological Markov chains}, 
Invent. Math. \textbf{56}(1980), 251--268.

\bibitem{EL} 
R. Exel and M. Laca, {\em Cuntz-Krieger algebras for infinite matrices},
J. Reine. Angew. Math. \textbf{512}(1999), 119--172.

\bibitem{FLR} N. J. Fowler, M. Laca, and I. Raeburn, 
{\em The $C^*$-algebras of infinite graphs}, 
Proc. Amer. Math. Soc. \textbf{8}(2000), 2319--2327.

\bibitem{GH} W. H. Gottschalk and G. A. Hedlund, 
{\em A characterization of the Morse minimal set},
Proc. Amer. Math. Soc., \textbf{15} (1964), 70-74.

\bibitem{GPS} T. Giordano, I. F. Putnam, and C. F. Skau, 
{\rm Topological orbit equivalence and $C^*$-crossed products}, 
J. reine angew. Math. \textbf{469}(1995), 51-111. 

\bibitem{JKK} J. A Jeong, E. J. Kang, and S. H. Kim, 
{\em AF labeled graph $C^*$-algebras}, 
J. Funct. Anal. \textbf{266}(2014), 2153--2173.

\bibitem{JKKP} J. A Jeong, E. J. Kang, S. H. Kim, and G. H. Park, 
{\em Finite simple labeled graph $C^*$-algebras of Cantor minimal subshifts}, 
arXiv:1504.03455v2 [math.OA].

\bibitem{KMST2} T. Katsura, P. Muhly, A. Sims, and M. Tomforde, 
{\em Ultragraph $C^*$-algebras via topological quivers},
J. Reine. Angew. Math. \textbf{640}(2010), 135--165.

\bibitem{Ke} M. Keane, 
{\em Generalized Morse sequences}, 
Z. Wahrscheinlichkeitstheorie verw. Geb. \textbf{10}(1968), 335-353.

\bibitem{KPR} A. Kumjian, D. Pask, and I. Raeburn, 
{\em Cuntz-Krieger algebras of directed graphs}, 
Pacific J. Math. \textbf{184}(1998), 161--174.

\bibitem{KPRR} A. Kumjian, D. Pask, I. Raeburn, and J. Renault
{\em Graphs, groupoids, and Cuntz-Krieger algebras}, 
J. Funct. Anal. \textbf{144}(1997), 505--541.

\bibitem{Ma}  K. Matsumoto, 
{\em On $C^*$-algebras associated with subshifts}, 
Intern. J. Math. \textbf{8}(1997), 457--374.

 \bibitem{R} I. Raeburn,  {\em Graph Algebras}, 
CBMS Regional Conference Series in Mathematics, vol. 103, Amer. Math. Soc., Providence, RI 2005.

\bibitem{RS} I. Raeburn and W. Szymanski,
{\em Cuntz-Krieger algebras of infinite graphs and matrices},
Trans. Amer. Math. Soc. \textbf{356}(2004), 39--59.


\bibitem{To1} M. Tomforde, 
{A unified approach to Exel-Laca algebras and $C^*$-algebras associated to graphs}, 
J. Operator Theory, \textbf{50}(2003), 345--368.


\end{thebibliography}
\end{document}